\newtheorem{thm}{Theorem}[section]
\newtheorem{lemma}[thm]{Lemma}
\newtheorem{corollary}[thm]{Corollary}
\newtheorem{claim}[thm]{Claim}
\newtheorem{definition}[thm]{Definition}
\newtheorem{remark}[thm]{Remark}
\newtheorem{fact}[thm]{Fact}
\newcommand\E{\mathop{\mathbb{E}}}
\newcommand\card[1]{\left| {#1} \right|}
\newcommand\sett[2]{\left\{ \left. #1 \;\right\vert #2 \right\}}
\newcommand\set[1]{{\left\{ #1 \right\}}}
\newcommand\Prob[2]{{\Pr_{#1}\left[ {#2} \right]}}
\newcommand\Expect[2]{{\mathop{\mathbb{E}}_{#1}\left[ {#2} \right]}}
\newcommand\norm[1]{\| #1 \|}
\newcommand\power[1]{\set{0,1}^{#1}}
\newcommand\defeq{\stackrel{def}{=}}
\newcommand\inner[2]{\langle{#1},{#2}\rangle}
\newcommand\eps{\varepsilon}
\def\gg{\gtrsim}
\def\ll{\lesssim}
\begin{document}

\begin{frontmatter}[classification=text]

\title{Isoperimetric Inequalities Made Simpler} 

\author[pgom]{Ronen Eldan}
\author[johan]{Guy Kindler}
\author[laci]{Noam Lifshitz\thanks{supported by ISF individual research grant 1980/22.}}
\author[andy]{Dor Minzer\thanks{Supported by a Sloan Research Fellowship and NSF CCF award 2227876.}}

\begin{abstract}
We give an alternative, simple method to prove isoperimetric inequalities over the hypercube. In particular, we show:
  \begin{enumerate}
    \item An elementary proof of classical isoperimetric inequalities of Talagrand, as well as a stronger isoperimetric result conjectured by Talagrand
    and recently proved by Eldan and Gross.
    \item A strengthening of the Friedgut junta theorem, asserting that if the $p$-moment of the sensitivity of a function is constant for some $1/2 + \eps\leq p\leq 1$,
    then the function is close to a junta. In this language, Friedgut's theorem is the special case that $p=1$.
  \end{enumerate}
\end{abstract}
\end{frontmatter}

\section{Introduction}
\subsection{Isoperimetric Inequalities Over the Hypercube}
Isoperimetric inequalities are fundamental results in mathematics with a myriad of applications. The main topic of this paper is discrete isoperimetric
inequalities, and more specifically isoperimetric inequalities over the hypercube $\{0,1\}^n$. Classical results along
these lines are due to Harper, Bernstein, Lindsey and Hart~\cite{Harper2,Bernstein,Lindsey,Hart}, Harper~\cite{Harper2}, Margulis~\cite{Margulis}, Talagrand~\cite{Tal1,Tal2,Tal4} and Kahn, Kalai and Linial~\cite{KKL}.

All of these theorems discuss various measures of boundaries and relations between them for Boolean functions.
For a function $f\colon\power{n}\to\power{}$ and a point $x\in\power{n}$, we define the \emph{sensitivity} of $f$ at $x$, denoted by $s_f(x)$,
to be the number of coordinates $i\in[n]$ such that $f(x\oplus e_i)\neq f(x)$. Thus, the \emph{vertex boundary} $V_f$ of a function $f$ is defined to be the set
of all $x$'s such that $s_f(x) > 0$, and the \emph{edge boundary} $E_f$ of $f$ is defined to be the set of edges $(x,x\oplus e_i)$ of the hypercube whose
endpoints are assigned different values by $f$. The most basic isoperimetric result related to Boolean functions,
known as Poincar\'{e}'s inequality, asserts that $\frac{\card{E_f}}{2^n}\geq {\sf var}(f)$,
i.e.\ the edge boundary of a function $f$ cannot be too small.\footnote{We remark that the term ``Poincar\'{e}'s inequality'' comes from the theory of functional inequalities
and Markov chains and refers to a much more general result, see for example~\cite[Section 2]{kitsos2014inequalities}. In the case of graphs, it asserts that for a graph
$G$, if the first non-trivial eigenvalue of its Laplacian $L$ is at least $\lambda$, then $\norm{L f}_2\geq \lambda \norm{f-\E[f]}_2$. In our
example, the Laplacian $L$ is $L = I-\frac{1}{n} A$ where $A$ is the adjacency matrix of the Boolean hypercube, and $\lambda$ is $2/n$.}
The edge isoperimetric inequality for the hypercube, due to Harper, Bernstein, Lindsey and Hart~\cite{Harper2,Bernstein,Lindsey,Hart}, is a strengthening
of that statement, asserting that for a fixed value of $\E[f]$, the functions that minimize the edge boundary $\frac{\card{E_f}}{2^n}$
are indicator functions
of initial segments in lexicographical orderings of the hypercube. In particular, this result implies that if $f$ is a non-constant Boolean function, then $\frac{\card{E_f}}{2^n}\geq \E[f]\log\left(\frac{1}{\E[f]}\right)$
(here and throughout, $\log$ means the logarithm function with base $2$). The vertex isoperimetric inequality, due to Harper~\cite{Harper2}, asserts that among all Boolean function
of a given measure, the functions that minimize the size of the vertex boundary $V_f$ are the indicator functions of
initial segments in simplicial orderings of the hypercube.
Margulis~\cite{Margulis} proved a result that combines the notion of edge boundary and vertex boundary, showing that for all functions $f$ one has that
$\frac{\card{V_f}}{2^n}\cdot \frac{\card{E_f}}{2^n}\geq \Omega({\sf var}(f)^2)$.

Talagrand considered a different notion of boundary for a function $f$, namely the quantity $\Expect{x}{\sqrt{s_f(x)}}$, which we will refer to as the \emph{Talagrand boundary} of $f$.
Here and throughout, the distribution over $x$ will be uniform over the hypercube $\power{n}$.
Talagrand proved a number of results about this quantity~\cite{Tal1,Tal2}, the most basic of which asserts that $\Expect{x}{\sqrt{s_f(x)}}\geq \Omega({\sf var}(f))$. Using a simple application of the Cauchy-Schwarz inequality one sees that the result of Talagrand implies Margulis' theorem. Talagrand then went on and strengthened the result for
functions with small variance, proving the following theorem:
\begin{thm}[Talagrand~\cite{Tal1}]\label{thm:Talagrand_ineq}
  For all non-constant Boolean functions $f\colon\{0,1\}^n\to\{0,1\}$ it holds that
  \[
    \Expect{x}{\sqrt{s_f(x)}}\geq\Omega\left({\sf var}(f)\sqrt{\log\left(\frac{1}{{\sf var}(f)}\right)}\right).
  \]
\end{thm}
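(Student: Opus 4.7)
The plan combines a log-Sobolev lower bound on the total influence with a Cauchy--Schwarz truncation that converts influence into the Talagrand boundary. First I would reduce to the small-variance regime: WLOG $p := \mathbb{E}[f] \leq 1/2$, and if ${\sf var}(f) \geq 1/100$ then $\sqrt{\log(1/{\sf var}(f))} = O(1)$ and the conclusion follows from the basic Talagrand bound $T(f) \gtrsim {\sf var}(f)$ already stated in the excerpt. So assume $p \leq 1/100$, whence ${\sf var}(f) \asymp p$ and $\log(1/{\sf var}(f)) \asymp \log(1/p)$. The logarithmic Sobolev inequality applied to $f$ then yields $I(f) := \Expect{x}{s_f(x)} \gtrsim p \log(1/p) \gtrsim {\sf var}(f) \log(1/{\sf var}(f))$.

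Next, with a threshold $L := C \log(1/p)$ to be tuned, I would use the pointwise inequality $\sqrt{s_f(x)} \geq s_f(x)/\sqrt{L}$ on the event $\{s_f(x) \in [1, L]\}$ to obtain
\[
 T(f) \geq \frac{1}{\sqrt{L}} \Expect{x}{s_f(x) \cdot \mathbbm{1}_{s_f(x) \leq L}} = \frac{1}{\sqrt{L}}\bigl(I(f) - \Expect{x}{s_f(x) \cdot \mathbbm{1}_{s_f(x) > L}}\bigr).
\]
If the tail satisfies $\Expect{x}{s_f(x) \cdot \mathbbm{1}_{s_f(x) > L}} \leq I(f)/2$, then $T(f) \gtrsim I(f)/\sqrt{L} \gtrsim {\sf var}(f) \sqrt{\log(1/{\sf var}(f))}$, as required.

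The main obstacle is this tail bound. I would attack it via the Bonami--Beckner hypercontractive inequality applied to the Boolean derivatives $g_i(x) := \mathbbm{1}[f(x) \neq f(x \oplus e_i)]$: each $g_i$ is the indicator of a set of measure $I_i$, and hypercontractivity should yield sub-exponential concentration of $s_f = \sum_i g_i$ at the ``log-Sobolev scale'' $\log(1/p)$. For functions where this estimate is not sharp---for instance, spherical-slice-type functions where every point of $V_f$ has sensitivity of order $n$---the boundary $T(f)$ already exceeds ${\sf var}(f) \sqrt{\log(1/{\sf var}(f))}$ by a wide margin, and the inequality follows directly from the alternative lower bound $T(f) \geq \sqrt{M}\,\Pr[s_f(x) \geq M]$ applied at a data-dependent scale $M$ that captures the typical sensitivity on $V_f$. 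The technical core is thus a clean dichotomy (or an adaptive threshold) unifying these two regimes.
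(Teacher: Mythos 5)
Your reduction to the small-variance regime and the log-Sobolev bound $I(f)=\Expect{x}{s_f(x)}\gtrsim p\log(1/p)$ are fine, but the argument then hinges on the tail estimate $\Expect{x}{s_f(x)\cdot 1_{s_f(x)>C\log(1/p)}}\leq I(f)/2$, and this is false --- not just unsharp --- for a basic family of examples, and your proposed fallback does not have the slack you claim. Take the linear threshold function $f(x)=1$ iff $\sum_i x_i\geq n/2+t\sqrt{n}/2$ with $t=\Theta(\sqrt{\log n})$, so that $p\asymp {\sf var}(f)$ is inverse-polynomial in $n$ and $\log(1/p)\asymp\log n$. Every point of the boundary has sensitivity $\Theta(n)\gg C\log(1/p)$, so the truncated sum $\Expect{x}{s_f(x)1_{s_f(x)\leq L}}$ is \emph{zero} and the Cauchy--Schwarz step gives nothing. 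Moreover this is exactly the case where your fallback fails to be generous: a direct computation gives $\Expect{x}{\sqrt{s_f(x)}}\asymp p\,t\asymp p\sqrt{\log(1/p)}$, i.e.\ the threshold function sits \emph{exactly at} the Talagrand bound, not above it ``by a wide margin.'' (Relatedly, the hoped-for hypercontractive concentration of $s_f$ at scale $\log(1/p)$ fails here: $s_f$ is either $0$ or $\Theta(n)$.) So the ``clean dichotomy'' you defer to is the entire content of the theorem: one must handle functions whose boundary mixes subcube-like pieces (where your truncation works) with threshold-like pieces (where it fails but there is no spare factor), and no mechanism for combining the two regimes, or even for defining the split, is given. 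This is precisely why Talagrand's theorem resists naive first-moment arguments.

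For contrast, the paper avoids any pointwise analysis of $s_f$: the triangle inequality gives $\Expect{x}{\norm{\nabla f(x)}_2}\geq 2\sqrt{W_{=1}[f]}$, random restrictions upgrade this to $\Expect{x}{\norm{\nabla f(x)}_2}\gtrsim\sqrt{d}\,W_{\geq d}[f]$ for every $d$, and the hypercontractive level-$d$ inequality shows that a function with variance $v$ has $\Omega(v)$ of its Fourier weight above level $d=\Omega(\log(1/v))$; both the subcube and the threshold example are handled uniformly because the argument never needs to know where on the cube the sensitivity lives. If you want to salvage a first-moment approach you would need, at minimum, a quantitative substitute for the tail bound that degrades gracefully into the direct bound $\sqrt{M}\Pr[s_f\geq M]$; I do not see how to do this without essentially reproving the theorem.
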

We remark that Talagrand's result is in fact more general and slightly stronger. First, Talagrand proves that the above inequality holds for the $p$-biased measure of the
cube if one inserts a factor of $\frac{\min(p,1-p)}{\sqrt{p(1-p)}}$ to the right hand side. The result and techniques of the current paper also generalize to the $p$-biased measure in a straight-forward manner,
and we focus on the case that $p=1/2$ for simplicity. Second, Talagrand considers a variant of the quantity $s_f(x)$ wherein $s_f(x)$ is
defined to be $0$ for $x$ such that $f(x) = 0$. The results we state below also hold with this stronger definition.

While Theorem~\ref{thm:Talagrand_ineq} can be seen to be tight up to constant factors (as can be seen by considering sub-cubes), it leaves something to be desired: it is incomparable to yet another prominent isoperimetric result -- the KKL theorem~\cite{KKL}.
The KKL Theorem is concerned with the influence of coordinates on a function. For a coordinate $i\in [n]$, the \emph{influence} of $i$ is defined by
\[
I_i[f] = \Prob{x\in\power{n}}{f(x)\neq f(x\oplus e_i)}.
\]
In words, it is the fraction of edges along direction $i$ that are in the edge boundary of $f$.
The KKL theorem~\cite{KKL} asserts that any function must have a variable that is at least somewhat influential; more quantitatively, they proved that
$\max_{i} I_i[f]\geq \Omega\left(\frac{\log n}{n} {\sf var}(f)\right)$.

Since the isoperimetric results of Talagrand and of KKL look very different (as well as the techniques that go into their proofs), Talagrand asked whether
one can establish a single isoperimetric inequality that is strong enough to capture both of these results simultaneously. In~\cite{Tal2}, Talagrand conjectures
such a statement, and makes some progress towards it. Namely, he shows that there exists $\alpha\in(0,1/2]$ such that for
any non-constant function $f\colon\power{n}\to\power{}$ it holds that
\begin{equation}\label{eq2}
\Expect{x}{\sqrt{s_f(x)}}\geq c\cdot {\sf var}(f)\left(\log\left(\frac{1}{{\sf var}(f)}\right)\right)^{1/2-\alpha}
\left(\log\left(1+\frac{1}{\sum\limits_{i=1}^{n} I_i[f]^2}\right)\right)^{\alpha},
\end{equation}
for some absolute constant $c>0$.
This inequality can be seen to imply a weaker result along the lines of the KKL theorem,\footnote{Namely that there is $\beta>0$ such that if ${\sf var}(f)\geq \Omega(1)$,
$\max_{i} I_i[f]\geq\Omega((\log n)^{\beta}/n)$.} but is not enough to fully recover it. Talagrand conjectured that the above inequality holds for
$\alpha = 1/2$, a statement that is strong enough to directly imply the KKL theorem.

Recently, Eldan and Gross~\cite{EG} introduced techniques from stochastic processes to the field, and used these to prove that Talagrand's conjecture is true, i.e.~that one may take $\alpha = 1/2$ in~\eqref{eq2}.

\subsection{Our Results}
We show a unified and simple Fourier analytic approach, based on random restrictions, to establish the results mentioned above, as well as a strengthening of Friedgut's junta theorem~\cite{Friedgut}. Our results are:
\begin{enumerate}
  \item {\bf Classical isoperimetric inequalities.} We give Fourier analytic proofs of the inequalities above by Margulis, Talagrand and Eldan-Gross,
  as well as of robust versions of them.
  Our proof technique also unravels a connection between these isoperimetric inequalities and other classical results in the area, such as the
  Friedgut junta theorem~\cite{Friedgut} and Bourgain's tail theorem~\cite{Bourgain}. The fact that such purely Fourier analytic proof exists
  is quite surprising to us, as the quantity $\Expect{x}{\sqrt{s_f(x)}}$ does not seem to relate well to Fourier analytic methods (and indeed,
  the proof of Talagrand proceeds by induction, whereas the proof of Eldan and Gross uses stochastic calculus).

  \item {\bf Strengthened junta theorems.} We establish a strengthening of Friedgut's theorem in which the assumption that the average sensitivity of $f$ is small,
  i.e. that $\Expect{x}{s_f(x)}$ is small, is replaced by the weaker condition that $\Expect{x}{s_f(x)^p}$ is small, for $p$ slightly larger than $1/2$.
\end{enumerate}

In the rest of this introductory section, we describe our results in more detail.

\subsection{Classical Isoperimetric Inequalities and Robust Versions}
We give simpler proofs for the results below, which are due to Talagrand~\cite{Tal1} and Eldan-Gross~\cite{EG}.
\begin{thm}\label{thm:tal_iso}
  There exists $c>0$ such that for all non-constant functions $f\colon\power{n}\to\power{}$ we have that
  \[
  \Expect{x}{\sqrt{s_f(x)}}\geq c\cdot {\sf var}(f)\sqrt{\log\left(\frac{1}{{\sf var}(f)}\right)}.
  \]
\end{thm}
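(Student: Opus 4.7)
The plan is to split into regimes based on the size of the influence profile $\sum_i I_i[f]^2$, and to extract the bound by Fourier methods in each regime. Set $\epsilon = {\sf var}(f)$ and $L = \log(1/\epsilon)$; we may assume $\epsilon$ is below a universal constant, else the theorem is trivial. The starting observation is a Jensen-type reduction: writing $s_f(x) = \sum_i h_i(x)$ where $h_i(x) = \mathbf{1}[f(x)\neq f(x\oplus e_i)] \in \{0,1\}$, we have $\sqrt{s_f(x)} = \|(h_1(x),\ldots,h_n(x))\|_{\ell^2}$, so Jensen's inequality for the convex function $v \mapsto \|v\|_{\ell^2}$ yields
\[
\Expect{x}{\sqrt{s_f(x)}} \;\geq\; \big\|(I_1[f],\ldots,I_n[f])\big\|_{\ell^2}.
\]
When $\sum_i I_i[f]^2 \geq \epsilon^2 L$ this already gives the desired bound.

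In the complementary regime $\sum_i I_i[f]^2 < \epsilon^2 L$, I would argue that the smallness of the individual influences forces the Fourier mass of $f$ to sit on high levels. Concretely, for a threshold $K \asymp L$, a level-$K$ Fourier inequality (a consequence of hypercontractivity) should give $\|f^{>K}\|_2^2 \gtrsim \epsilon$, and hence $\Expect{x}{s_f(x)} = \sum_k k\,\|f^{=k}\|_2^2 \gtrsim L\epsilon$. The target $\Expect{x}{\sqrt{s_f(x)}} \gtrsim \epsilon\sqrt{L}$ would then follow from a distributional lower bound of the form $\Prob{x}{s_f(x) \gtrsim L} \gtrsim \epsilon$ (or an appropriate dyadic analogue), via the elementary estimate $\Expect{x}{\sqrt{s_f(x)}} \geq \sqrt{t}\,\Prob{x}{s_f(x) \geq t}$.

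The main obstacle is this distributional estimate on $s_f$. A plain Paley--Zygmund using $\Expect{x}{s_f(x)^2}$ is too weak on canonical ``spread'' examples such as tribes, and a naive Margulis-style estimate $V_f \gtrsim \epsilon$ fails on thin Hamming balls; in the plan, the regime assumption $\sum_i I_i[f]^2 < \epsilon^2 L$ is precisely what rules these obstructions out. I would attempt to combine the smallness of $\sum_i I_i[f]^2$ (which controls the off-diagonal $\sum_{i\neq j}\Expect{x}{h_i(x)h_j(x)}$) with the high-degree Fourier mass guaranteed by the level-$K$ step, so as to show that the distribution of $s_f$ must in fact concentrate at the correct dyadic scale. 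Arranging this balance is essentially a Fourier-analytic counterpart of the stochastic argument of Eldan--Gross, and is the technical heart of the proof.
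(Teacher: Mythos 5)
Your first regime is sound: the pointwise identity $\sqrt{s_f(x)}=\norm{(h_1(x),\dots,h_n(x))}_{\ell^2}$ together with Jensen gives $\Expect{x}{\sqrt{s_f(x)}}\geq\big(\sum_i I_i[f]^2\big)^{1/2}$, which is (a slight strengthening of) Lemma~\ref{lemma:tal_lvl1}. The gap is in the second regime, where what you write is a plan whose missing step is precisely the crux of the theorem. From ``most Fourier mass lies above level $K\asymp L$'' you correctly get $\Expect{x}{s_f(x)}\gtrsim \eps L$, but to reach $\Expect{x}{\sqrt{s_f(x)}}\gtrsim\eps\sqrt L$ you invoke an unproved anticoncentration estimate $\Prob{x}{s_f(x)\gtrsim L}\gtrsim\eps$. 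This is strictly stronger than the theorem (it immediately implies it), it is exactly the kind of distributional statement that the stochastic arguments of Eldan--Gross and the KMS machinery are built to address, and your proposed mechanism for it does not work as stated: $\sum_i I_i[f]^2$ does \emph{not} control the off-diagonal sum $\sum_{i\neq j}\Expect{x}{h_i(x)h_j(x)}$ (one only has $\Expect{x}{h_ih_j}\leq\sqrt{I_iI_j}$, and for tribes $\sum_iI_i[f]^2=o(1)$ while $\sum_{i\neq j}\Expect{x}{h_ih_j}=\Theta(\log^2 n)$). So as written the second regime does not close.

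The paper avoids any distributional information about $s_f$ via a random-restriction argument that is missing from your proposal. Keep each coordinate alive independently with probability $1/d$ and fix the rest; by Cauchy--Schwarz the restricted sensitivity satisfies, pointwise in $x$, $\Expect{J}{\sqrt{s_{f_{\bar J\to x_{\bar J}}}(x_J)}}\leq\sqrt{s_f(x)/d}$, while the expected level-one Fourier weight of the restriction is $\gtrsim W_{\approx d}[f]$ (Fact~\ref{fact:restriction}). Applying your regime-one triangle inequality to each restricted function and averaging over restrictions yields $\Expect{x}{\sqrt{s_f(x)}}\gtrsim\sqrt d\,W_{\geq d}[f]$ (Lemmas~\ref{lemma:tal_lvld_exact} and~\ref{lemma:tal_lvld_above}), with no tail bound on $s_f$ ever needed. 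Combined with the hypercontractive estimate $W_{\leq c\log(1/\eps)}[f]\leq 0.9\,\eps$ (which uses only that $\E[f]$ is small), this finishes the proof; in particular your case split on $\sum_iI_i[f]^2$ is unnecessary for this theorem --- it is the right split for Theorem~\ref{thm:EG}, not for this one.
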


\begin{thm}\label{thm:EG}
  There exists $c>0$ such that for all non-constant functions $f\colon\power{n}\to\power{}$ we have that
  \[
  \Expect{x}{\sqrt{s_f(x)}}\geq c\cdot {\sf var}(f)\sqrt{\log\left(1+\frac{1}{\sum\limits_{i=1}^{n} I_i[f]^2}\right)}.
  \]
\end{thm}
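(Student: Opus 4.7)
The plan is to reduce Theorem~\ref{thm:EG} to two subproblems via a truncation argument, and then attack both by Fourier analysis. Set $K := c\log\!\bigl(1+1/\sum_i I_i[f]^2\bigr)$ for a small absolute constant $c>0$ to be tuned at the end. The elementary pointwise bound $\sqrt{s_f(x)}\geq s_f(x)/\sqrt{K}$, valid whenever $s_f(x)\leq K$, yields after averaging
\[
\sqrt{K}\cdot\Expect{x}{\sqrt{s_f(x)}}\;\geq\;\Expect{x}{s_f(x)}-\Expect{x}{s_f(x)\mathbf{1}[s_f(x)>K]}.
\]
It therefore suffices to prove (i) a strengthened Poincar\'e inequality $\sum_i I_i[f]\gtrsim {\sf var}(f)\cdot K$, and (ii) a tail bound $\Expect{x}{s_f(x)\mathbf{1}[s_f(x)>K]}\leq \tfrac{1}{2}\sum_i I_i[f]$; together these give $\Expect{x}{\sqrt{s_f(x)}}\gtrsim {\sf var}(f)\sqrt{K}$, which is the theorem.

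For (i), when $\sum_i I_i[f]^2\gtrsim 1$ we have $K=O(1)$ and (i) is just the ordinary Poincar\'e inequality $\sum_i I_i[f]\geq {\sf var}(f)$. In the regime $\sum_i I_i[f]^2\ll 1$, I would recover the extra logarithmic factor by a KKL-style hypercontractive argument: applying Bonami--Beckner to low-degree projections of $f$ forces $\sum_i I_i[f]$ to carry a logarithmic overhead over ${\sf var}(f)$ whenever the influence vector is spread out, which is precisely the condition $\sum_i I_i[f]^2\ll 1$.

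For (ii), the technical heart of the argument, the strategy is to bound a higher moment $\Expect{x}{s_f(x)^q}$ for some $q>1$ via Fourier. Writing $s_f(x)=\sum_i\sigma_i f(x)$ with $\sigma_i f(x)=\mathbf{1}[f(x)\neq f(x\oplus e_i)]$, one notes that for Boolean $f$ the derivative $\partial_i f$ is $\{-1,0,1\}$-valued, hence $\|\partial_i f\|_p^p = I_i[f]$ for every $p>0$; this makes the $\partial_i f$'s exceptionally well-behaved under hypercontractivity. Expanding $s_f^q$ and applying Bonami--Beckner summand-by-summand should yield an $L^q$ estimate of the form $\Expect{x}{s_f(x)^q}\lesssim (\sum_i I_i[f]^2)\cdot (\text{hypercontractive correction})$, after which Markov's inequality on $s_f^q$ produces the desired tail bound at our chosen $K$. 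The principal obstacle is calibrating this moment estimate: the summands $\sigma_i f$ are correlated and their joint Fourier structure couples non-linearly to $f$ through the indicator $\mathbf{1}[f(x)\neq f(x\oplus e_i)]$, so extracting an $L^q$ bound tight enough to match $K\asymp\log(1+1/\sum_i I_i[f]^2)$ is the delicate step. It is here that the paper's Fourier framework---drawing on the same hypercontractive ingredients that underlie Friedgut's junta theorem and Bourgain's tail bound---replaces the inductive argument of Talagrand and the stochastic-calculus argument of Eldan--Gross.
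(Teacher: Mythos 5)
Your reduction is internally consistent, but it funnels the theorem into a statement --- your tail bound (ii) --- that is false, and the failure is structural rather than a matter of constants. Take $f={\sf Maj}_n$: there $s_f(x)\in\{0,\lceil n/2\rceil\}$, while $\sum_i I_i[f]^2=\Theta(1)$ and hence $K=O(1)$ for any fixed choice of your constant $c$. Once $n$ is large, every point of nonzero sensitivity has $s_f(x)>K$, so $\Expect{x}{s_f(x)\mathbf{1}[s_f(x)>K]}=\Expect{x}{s_f(x)}$ and your lower bound $\Expect{x}{s_f(x)}-\Expect{x}{s_f(x)\mathbf{1}[s_f(x)>K]}$ degenerates to $0$ --- even though the theorem holds for majority with room to spare, since $\Expect{x}{\sqrt{s_f(x)}}=\Theta(1)$. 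The same collapse occurs (for small $c$) for Tribes, which lives in the regime where $\sum_i I_i[f]^2$ is small and the logarithmic factor genuinely matters: conditioned on $s_f(x)>0$ the sensitivity concentrates at $\Theta(\log n)$, above $K=c\log n$, so the truncation again discards essentially all of $\Expect{x}{\sqrt{s_f(x)}}$. The underlying problem is that $\Expect{x}{\sqrt{s_f(x)}}$ is not recoverable from $\Expect{x}{s_f(x)}$ together with a single truncation level: functions like majority put their entire sensitivity mass on a sparse set of points of huge sensitivity, and the inequality $\sqrt{s}\geq s/\sqrt{K}$ is vacuous exactly there. For the same reason the proposed route to (ii) via higher moments cannot be calibrated: the functions $\sigma_i f=\card{\partial_i f}$ have no degree bound, so Bonami--Beckner does not apply to them summand-by-summand, and no bound on $\Expect{x}{s_f(x)^q}$ in terms of $\sum_i I_i[f]^2$ of the required strength exists (for majority, $\Expect{x}{s_f(x)^2}=\Theta(n^{3/2})$ while $\sum_i I_i[f]^2=\Theta(1)$).

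Your claim (i) is true, but the sketch under-sells what it needs: vanilla hypercontractivity on low-degree projections yields only a $\log(1/{\sf var}(f))$ gain; to get $\log\bigl(1+1/\sum_i I_i[f]^2\bigr)$ one needs the Talagrand/Bourgain--Kalai-style level-$d$ inequality $W_{\leq c_1\log(1/M[f])}[f]\leq M[f]^{c_2}$ with $M[f]=\sum_i I_i[f]^2$ (Theorem~\ref{thm:KK}), combined with $\sum_i I_i[f]=\sum_S\card{S}\widehat{f}(S)^2\geq d\,W_{\geq d}[f]$. The paper's proof uses exactly this Fourier-side input but pairs it with a different, lossless lower bound in place of your truncation: Lemma~\ref{lemma:tal_lvld_above} gives $\Expect{x}{\norm{\nabla f(x)}_2}\gg\sqrt{d}\,W_{\geq d}[f]$ for every $d$ (via random restrictions collapsing level $d$ to level $1$ and the triangle-inequality bound of Lemma~\ref{lemma:tal_lvl1}), and Theorem~\ref{thm:KK} places at least half of ${\sf var}(f)$ above level $d=c_1\log(1/M[f])$. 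Replacing the pointwise truncation of $s_f$ by this decomposition on the Fourier side is the repair: $\sqrt{d}\,W_{\geq d}[f]$ correctly credits points of very high sensitivity, which your threshold throws away.
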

Our technique is more general, and can be used to prove more robust versions of Theorems~\ref{thm:tal_iso},~\ref{thm:EG}.
For a $2$-coloring of the edges of the hypercube ${\sf col}\colon E(\{0,1\}^n)\to \{{\sf red}, {\sf blue}\}$,
define the red sensitivity as $s_{f,{\sf red}}(x) = 0$ if $f(x) = 0$, and otherwise $s_{f,{\sf red}}(x)$ is the number of sensitive edges
adjacent to $x$ that are red. Similarly, define the blue sensitivity as $s_{f,{\sf blue}}(x) = 0$ if $f(x) = 1$ and otherwise
$s_{f,{\sf blue}}(x)$ is the number of sensitive edges adjacent to $x$ that are blue. Then, defining
\[
{\sf Talagrand}_{{\sf col}}(f)
=
\Expect{x}{\sqrt{s_{f,{\sf red}}(x)}}
+\Expect{x}{\sqrt{s_{f,{\sf blue}}(x)}}
\]
we have that the left hand side of Theorems~\ref{thm:tal_iso},~\ref{thm:EG} can be replaced by
${\sf Talagrand}_{{\sf col}}(f)$ for any $2$-coloring ${\sf col}$. Such results can be used
to prove the existence of nearly bi-regular structures in the sensitivity graph of $f$ which
are sometimes useful in applications (for example, in~\cite{KMSmono} such structures in
the directed sensitivity graph are necessary to construct monotonicity testers).
To get some intuition, consider the bipartite sub-graph of the hypercube consisting
of sensitive edges of $f$, and suppose that we want to find a large sub-graph of it
which is nearly regular, in the sense that the degrees of one side are between $d$ and $2d$, while
the degrees of the other side are at most $d$ for some $d$. In that case, taking the coloring ${\sf col}_{{\sf red}}$ that assigns
to all edges red or the coloring ${\sf col}_{{\sf blue}}$ that assigns all of the edges blue (which amount to Talagrand's original formulation)
gives us information about the typical degrees of vertices $x$ with $f(x) = 1$ and vertices $y$ with $f(y) = 0$ respectively.
It doesn't give us any information about the possible relation between the degrees of such $x$'s and such $y$'s, though.
Using the coloring ${\sf col}$ appropriately, one may establish a better balance between these two degrees and get useful
information on the relation between them. Establishing such robust results using the techniques of~\cite{EG} is more challenging.

\subsection{New Junta Theorems}
Next, we study connections between Talagrand-like quantities such as $\Expect{x}{\sqrt{s_f(x)}}$, and the notion of juntas.
For a point $x\in \{0,1\}^n$ and a subset $J\subseteq [n]$, we denote by $x_J$ the point in $\{0,1\}^J$ corresponding to
the value of $x$ on coordinates $J$.
\begin{definition}
  A function $f\colon\{0,1\}^n\to\{0,1\}$ is called a $j$-junta if there exists $J\subseteq [n]$ of size at most $j$,
  and $g\colon \{0,1\}^J\to\{0,1\}$ such that $f(x) = g(x_J)$ for all $x\in\{0,1\}^n$.

  We say a function $f\colon\{0,1\}^n\to\{0,1\}$ is $\eps$-close to a $j$-junta if there is a $j$-junta $g\colon\{0,1\}^n\to\{0,1\}$
  such that $\Prob{x}{f(x)\neq g(x)}\leq \eps$.
\end{definition}
On the one hand, it is possible that the quantity $\Expect{x}{\sqrt{s_f(x)}}$ is constant while the function $f$ is very far from being a junta,
as can be seen by taking $f$ to be the majority function. On the other hand, if we look at the expected sensitivity -- that is
$\Expect{x}{s_f(x)}$ -- a well known result of Friedgut~\cite{Friedgut} asserts that if $\Expect{x}{s_f(x)} = O(1)$, then $f$
is close to junta. More precisely, Friedgut's result asserts that if $\Expect{x}{s_f(x)} \leq A$, then
for all $\eps>0$, $f$ is $\eps$-close to a $2^{O(A/\eps)}$-junta. In light of this contrast, it is interesting to ask
what happens if an intermediate moment of the sensitivity, say the $p$th moment for $p\in (1/2,1)$, is constant.

We prove a strengthening of Friedgut's junta theorem~\cite{Friedgut} addressing this case, morally showing that for any $p>1/2$, bounded $p$-moment of
the sensitivity implies closeness to junta. More precisely:
\begin{thm}\label{thm:super_fried}
  For all $\eta>0$ there is $C = C(\eta)>0$ such that  the following holds.
  For all $\eps>0$ and $A>0$, and $1/2 + \eta\leq p\leq 1$, if $f\colon\{0,1\}^n\to\{0,1\}$ is a function such that
  $\Expect{x}{s_f(x)^{p}}\leq A$, then $f$ is $\eps$-close to a $J$-junta, where $J = 2^{C\left(A/\eps\right)^{1/p}}$.
\end{thm}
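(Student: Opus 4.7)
The plan is to reduce Theorem~\ref{thm:super_fried} to a level-$k$ analog of the Friedgut influence inequality, tailored to $p$-th moments of sensitivity, and then feed this into the classical junta-capturing scheme.

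The central lemma I would prove is the bound
$$\sum_{|S|\geq k}\hat f(S)^2 \;\leq\; C(\delta)\,\frac{\mathbb{E}[s_f(x)^p]}{k^p},$$
valid for every Boolean $f$ and every $p\geq \tfrac12+\delta$. For $p=1$ this is the classical identity $\mathbb{E}[s_f]=\sum_S|S|\hat f(S)^2\geq k\sum_{|S|\geq k}\hat f(S)^2$; for $p<1$, however, there is no Fourier expansion of $\mathbb{E}[s_f^p]$ and a genuine inequality is needed. I would derive it from the Fourier-analytic framework the paper has already set up to prove Theorems~\ref{thm:tal_iso} and~\ref{thm:EG}: decompose $s_f$ by Fourier level, apply a hypercontractive/duality argument to pass between an $L^{2p}$-type norm of the coordinate indicator vector $(\mathbf 1_{f(x)\neq f(x\oplus e_i)})_i$ and the sensitivity itself, and absorb the lost constants into $C(\delta)$. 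The threshold $p>\tfrac12$ is genuine here: Majority has $\mathbb{E}[\sqrt{s_f}]$ bounded yet is not close to any junta, so no such tail bound can hold at $p=\tfrac12$.

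Given the level-$k$ lemma, set $k=\Theta\big((A/\eps)^{1/p}\big)$ so that $\sum_{|S|\geq k}\hat f(S)^2\leq\eps/2$, i.e.\ $\|f-f^{\leq k}\|_2^2\leq\eps/2$. I would then run the standard Friedgut junta-capturing argument on $f^{\leq k}$: let $J=\{i:I_i^{\leq k}[f]\geq\tau\}$ where $I_i^{\leq k}[f]=\sum_{S\ni i,\,|S|\leq k}\hat f(S)^2$. Since $\sum_i I_i^{\leq k}[f]=\sum_{|S|\leq k}|S|\hat f(S)^2\leq k$, we get $|J|\leq k/\tau$. A Bonami-Beckner hypercontractive estimate applied to $f^{\leq k}$, exploiting its degree bound, then gives that for $\tau$ exponentially small in $k$ one has $\sum_{|S|\leq k,\,S\not\subseteq J}\hat f(S)^2\leq\eps/2$, with $|J|\leq 2^{O(k)}=2^{O((A/\eps)^{1/p})}$. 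Adding the two tails yields $\|f-f^J\|_2^2\leq\eps$, and rounding $f^J$ to a Boolean function produces a $J$-junta at Hamming distance $O(\eps)$ from $f$.

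The main obstacle is the level-$k$ inequality for $p<1$: the $p=1$ case is a one-line identity, but for sublinear $p$ one has to extract Fourier-tail control from a genuinely nonlinear functional of $s_f$, and the natural route is through a hypercontractive passage that degrades as $p\searrow\tfrac12$, forcing $C(\delta)\to\infty$ as $\delta\to 0$. A secondary subtlety is arranging the junta step to give $2^{O(k)}$ and not $2^{O(k/\eps)}$; this requires leveraging the degree-$k$ structure of $f^{\leq k}$ and setting $\tau$ exponentially small in $k$, with the exponential factor absorbed by hypercontractivity, so that the final bound is the promised $2^{C(A/\eps)^{1/p}}$ rather than a worse function of $A$ and $\eps$.
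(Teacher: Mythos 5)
Your first step---the tail bound $W_{\geq k}[f]\ll \Expect{x}{s_f(x)^p}/k^p$---is correct and is exactly the paper's Lemma~\ref{lemma:tal_lvld_above_alpha} (proved via random restrictions and the triangle-inequality bound of Lemma~\ref{lemma:tal_lvl1}); note it in fact holds uniformly for all $p\geq 1/2$ with no blow-up as $p\to 1/2$. The genuine gap is in the second step. The ``standard Friedgut junta-capturing argument'' does not go through under the hypothesis $\Expect{x}{s_f(x)^p}\leq A$ alone. That argument bounds $\sum_{|S|\leq k,\,S\not\subseteq J}\widehat f(S)^2\leq \sum_{i\notin J}I_i^{\leq k}[f]$ and then uses hypercontractivity to get $I_i^{\leq k}[f]\leq 3^{O(k)}I_i[f]^{3/2}\leq 3^{O(k)}\sqrt{\tau}\,I_i[f]$, so that summing requires a bound on the \emph{total influence} $\sum_i I_i[f]=\Expect{x}{s_f(x)}$. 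For $p<1$ this quantity is not controlled by $A$: one can have $\Expect{x}{s_f(x)^p}=O(1)$ while $\Expect{x}{s_f(x)}=n^{\Omega(1)}$ (concentrate huge sensitivity on a set of measure $n^{-p}$). Your observation that $\sum_i I_i^{\leq k}[f]\leq k$ only bounds $|J|$; it does nothing to make the sum over $i\notin J$ small, since each term being $\leq\tau$ with total $\leq k$ gives no gain. Running your argument literally yields a junta size that depends on $n$, not the claimed $2^{C(A/\eps)^{1/p}}$.

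The missing idea is that the $p$-moment hypothesis must be used a \emph{second} time, inside the junta-capturing step, as a substitute for the total-influence bound. This is where all the work in Section~\ref{sec:super_fried} lives: the paper applies a random restriction to a suitably noised version of $f$, splits the surviving singleton coefficients outside $\mathcal{J}$ into those of magnitude $\leq\tau$ and $>\tau$, bounds the small ones by $\tau^{2p-1}d^{-p}\Expect{x}{s_f(x)^p}$ via the $\ell_{1/p}$-norm of the gradient (Claim~\ref{claim:super_fried_main}, using the same triangle-inequality mechanism as Lemma~\ref{lemma:tal_lvl1_L_p} together with Claim~\ref{claim:noise_decreases_tal}), and the large ones by hypercontractivity against the noisy-influence threshold $\delta$; a self-bounding inequality over dyadic scales of $|S\cap\bar{\mathcal{J}}|$ (Claims~\ref{claim:bound_low_deg_inf} and~\ref{claim:bd_outside_junta}) is then needed to get the tight $2^{O(d)}$ junta size rather than $2^{O(d\log d)}$. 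Note also that the $\tau^{2p-1}$ factor in this step, not the tail lemma, is where the hypothesis $p>1/2$ is genuinely consumed, which is why $\delta$ is taken to be $2^{-C_2 d/(2p-1)}$.
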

Theorem~\ref{thm:super_fried} is tight, and can be seen as a generalization of Friedgut's Junta Theorem (corresponding to the case that
$p=1$) that is somewhat close in spirit to Bourgain's Junta Theorem. Indeed, for all $p\geq 1/2$ the tribes function shows
this to be tight. Here, the tribes function is the function in which we take a partition $P_1\cup\ldots\cup P_m$ of $[n]$
and define ${\sf Tribes}(x) = 1$ if there is $i$ such that $x_{P_{i}} = \vec{1}$, and ${\sf Tribes}(x) = 0$ otherwise.
We take $P_i$ of equal size and the probability that ${\sf Tribes}(x)=1$ is roughly $1/2$; the correct choice
of parameters is $\card{P_i} = \log n - \log\log n + c$
and $m =  \frac{n}{\log n - \log\log n + c}$ where $c = O(1)$.
For this function one has $\Expect{x}{s_{{\sf Tribes}}(x)^{p}} = \Theta((\log n)^p)$. Indeed, as $s_{{\sf Tribes}}(x) = \Theta(\log n)$ with
constant probability we have the lower bound, and the upper bound follows by Jensen's inequality and the fact that the total influence of ${\sf Tribes}$
is $\Theta(\log n)$.

As a consequence, Theorem~\ref{thm:super_fried} also implies a version of the KKL theorem for $p$th moments of sensitivity:
\begin{corollary}\label{thm:KKL_varaint}
  For all $\delta>0$, there is $c>0$ such that the following holds for all $p\in [1/2+\delta,1]$.
  Suppose that a function $f\colon \{0,1\}^n\to \{0,1\}$ satisfies that $\Expect{x}{s_f(x)^{p}}\leq A$, then there exists $i\in [n]$ such that
  \[
  I_i[f]\geq 2^{-c \left(A/{\sf var}(f)\right)^{1/p}}.
  \]
\end{corollary}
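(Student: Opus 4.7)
The plan is to derive Corollary~\ref{thm:KKL_varaint} from Theorem~\ref{thm:super_fried} by extracting an influential coordinate from the junta it produces. Let $\alpha = (A/{\sf var}(f))^{1/p}$. First, I would apply Theorem~\ref{thm:super_fried} with $\varepsilon = {\sf var}(f)/2$, obtaining a junta $g$ supported on a set $S \subseteq [n]$ with $\card{S} \leq J := 2^{C'\alpha}$, where $C' = C'(\delta)$ absorbs the $2^{1/p}$ factor arising from the choice of $\varepsilon$.

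Rather than working with the Boolean junta $g$ directly, the key step is to pass to the real-valued $L^2$-projection $h(x) := \E_y[f(y) \mid y_S = x_S]$. Since $h$ minimizes the $L^2$-distance to $f$ among all functions depending only on $S$, one has $\|f - h\|_2^2 \leq \|f - g\|_2^2 = \Prob{x}{f(x) \neq g(x)} \leq \varepsilon$. By orthogonality of the projection, ${\sf var}(h) = {\sf var}(f) - \|f - h\|_2^2 \geq {\sf var}(f)/2$. Applying the spectral Poincar\'e inequality to $h$, namely ${\sf var}(h) \leq \tfrac{1}{4}\sum_{i \in S}\E_x[(h(x) - h(x \oplus e_i))^2]$, produces a coordinate $i \in S$ for which $\E_x[(h(x) - h(x \oplus e_i))^2] \geq 4\,{\sf var}(h)/\card{S}$. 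Combining this with the Fourier comparison $\E_x[(h(x) - h(x \oplus e_i))^2] = 4\sum_{T \ni i,\, T \subseteq S}\hat f(T)^2 \leq 4\sum_{T \ni i}\hat f(T)^2 = I_i[f]$ yields $I_i[f] \geq 2\,{\sf var}(f)/J$.

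To remove the leading ${\sf var}(f)$ factor, I would invoke the $p$-th power version of Theorem~\ref{thm:tal_iso} (noted in the introduction as being within the scope of the paper's Fourier analytic method): $\E_x[s_f(x)^p] \gtrsim {\sf var}(f) \log^p(1/{\sf var}(f))$. Combined with $\E_x[s_f(x)^p] \leq A$, this forces $\alpha \gtrsim \log(1/{\sf var}(f))$, hence ${\sf var}(f) \geq 2^{-c''\alpha}$, and substitution into the previous display gives $I_i[f] \geq 2^{-c\alpha}$ for an appropriate $c = c(\delta)$. The main obstacle is recognizing that the Boolean junta $g$ of Theorem~\ref{thm:super_fried} cannot be compared to $f$ coordinate-wise via the naive $\card{I_i[f] - I_i[g]} \leq 2\varepsilon$ bound: this is too lossy whenever $J$ is exponentially large in $\alpha$, since then $I_i[g] \approx {\sf var}(g)/J$ is much smaller than $\varepsilon$. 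Switching to the $L^2$-projection $h$ and exploiting the Fourier domination $\sum_{T \ni i,\, T \subseteq S}\hat f(T)^2 \leq \sum_{T \ni i}\hat f(T)^2$ is what makes the influence lower bound non-trivial.
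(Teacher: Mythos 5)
Your proposal is correct and follows the route the paper intends (the corollary is stated as a consequence of Theorem~\ref{thm:super_fried}, with the derivation left implicit): apply the junta theorem with $\eps={\sf var}(f)/2$, locate a coordinate of the junta set carrying at least a $1/J$ fraction of the variance, and use the $p$-th power variant of Theorem~\ref{thm:tal_iso} to absorb the leading ${\sf var}(f)$ into the exponent. Your passage to the $L^2$-projection $h$ is a valid way to recover the spectral statement $\sum_{T\not\subseteq S}\widehat{f}(T)^2\leq\eps$ from the Boolean junta guarantee; note only that the paper's proof of Theorem~\ref{thm:super_fried} establishes this Fourier concentration on $\mathcal{J}$ directly, so the detour through $g$ and $h$, while correct, could be bypassed by citing that intermediate bound.
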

\begin{proof}
  Assume without loss of generality that $\E[f]\leq \frac{1}{2}$ (otherwise we work with $1-f$).
  Take $\eps = \E[f]/10$ and apply Theorem~\ref{thm:super_fried} to get that $f$ is $\eps$-close to a function
  $g$ which is a $J$-junta for $J = 2^{\Theta(\left(A/{\sf var}(f)\right)^{1/p})}$, and let $\mathcal{J}$ be the set
  of coordinates that $g$ depends on. Choose $x\in \{0,1\}^n$ uniformly, and take $y\in\{0,1\}^n$ where $y_{\overline{\mathcal{J}}} = x_{\overline{\mathcal{J}}}$
  and $y_{\mathcal{J}}$ is sampled uniformly from $\power{\mathcal{J}}$. Then $\Prob{x,y}{g(x)=0, g(y)=1} = \E[g](1-\E[g])$; also, $f(x) = g(x)$ and $f(y) = g(y)$ hold with
  probability at least $1-2\eps$, hence we conclude that
  \[
  \Prob{x,y}{f(x)=0, f(y)=1} \geq \E[g](1-\E[g]) - 2\eps \geq \frac{1}{2}(\E[f]-\eps) - 2\eps \geq \frac{1}{4}\E[f].
  \]
  On the other hand, write $\mathcal{J} = \set{i_1,\ldots,i_J}$ and consider a path $x(0),\ldots,x(J)$ where $x(0) = x$, $x(J) = y$ and $x(t), x(t+1)$
  may differ only on coordinate $i_{t+1}$. Then by the union bound,
  \[
  \frac{1}{4}\E[f]\leq
  \Prob{x,y}{f(x)=0, f(y)=1}
  \leq \sum\limits_{t=0}^{J-1}\Prob{x,y}{f(x(t))=0, f(x(t+1))=1}
  =\sum\limits_{t=0}^{J-1} I_{i_{t+1}}[f]
  \leq J\max_i I_i[f],
  \]
  and the result follows.
\end{proof}
We note that the KKL theorem is this statement for $p=1$, and the version for $p<1$ is only stronger than the KKL theorem as always
$\Expect{x}{s_f(x)^{p}}
\leq \Expect{x}{s_f(x)}^p$.

\begin{remark}
We remark that a naive application of our method establishes a sub-optimal dependency between the parameters $A$ and $J$ in Theorem~\ref{thm:super_fried},
but gives an interesting relation to the noise sensitivity theorem of Bourgain~\cite{Bourgain}. More precisely, in Corollary~\ref{corr:noise_stable} we show that if
$f\colon\power{n}\to\power{}$ satisfies $\Expect{x}{s_f(x)^p}\leq A$, then $f$ must be noise stable.
More specifically, for $\eps>0$, we define the $(1-\eps)$-correlated distribution $(x,y)$
by taking $x\in \power{n}$ uniformly, and for each $i\in[n]$ independently, setting $y_i = x_i$ with probability $1-\eps$
and otherwise resampling $y_i\in\power{}$ uniformly. In this language, we show that if $\Expect{x}{s_f(x)^p}\leq A$, and $p<1$,
then for all $\eps>0$,
\[
\Prob{x,y\text{ $(1-\eps)$-correlated}}{f(x) = f(y)} \geq 1-O_p(A\cdot \eps^p).
\]
From this, one may establish a weaker variant of Theorem~\ref{thm:super_fried} by appealing to Bourgain's result~\cite{Bourgain} (see also~\cite{kindler2018gaussian}).
The version of that theorem from~\cite[Theorem 3.5]{kindler2018gaussian} asserts
that if the Fourier weight of a function $f$ above level $k$ is at most $\xi/\sqrt{k}$, then the function is $(1+o(1))\xi$-close
to a $J = 2^{O(k)}/\xi^4$-junta. To use this result, one may observe that the noise stability bound above implies that for $k = 1/\eps$,
the Fourier weight of $f$ above level $k$ is at most $O_p(A \eps^{p}) = O_p\left(\frac{A \eps^{p-1/2}}{\sqrt{k}}\right)$, hence
by~\cite[Theorem 3.5]{kindler2018gaussian} $f$ is $O(A\eps^{p-1/2})$-close to a $J = \frac{2^{O(1/\eps)}}{A\eps^{p-1/2}}$-junta for all $\eps>0$.
In particular, setting $\eps = \Theta((\delta/A)^{2/(2p-1)})$, we get that $f$ is $\delta$-close to a $J = 2^{O((A/\delta)^{2/(2p-1)})}$ junta.
\end{remark}

\subsection{Proof Overview}
In this section we give an outline of the proof of Theorems~\ref{thm:tal_iso} and~\ref{thm:EG}. Both results rely on the same simple observation, that, once combined
with off-the-shelf Fourier concentration results, finishes the proof.

Given $f\colon \power{n}\to\{0,1\}$, we consider its Fourier coefficients
$\widehat{f}(S) = \Expect{x\in\power{n}}{f(x)\prod\limits_{i\in S}(-1)^{x_i}}$ as well as its gradient
$\nabla f\colon \power{n}\to\{-1,0,1\}^n$ defined as $\nabla f(x) = (f(x\oplus e_i) - f(x))_{i=1,\ldots,n}$.
We first observe that the square root of the sensitivity of $f$ at $x$ is nothing but $\norm{\nabla f(x)}_2$,
and also that looking at the absolute value of the $i$th entry in $\nabla f$, namely at the function
$x\rightarrow \card{f(x\oplus e_i) - f(x)}$, one has that its average is at least the absolute value of the
singleton Fourier coefficient $\card{\widehat{f}(\{i\})}$. This suggests that if $f$ has significant singleton
Fourier coefficients then the magnitude of the gradient should be large, and therefore the expected root of the sensitivity
should also be large. Indeed, by a simple application of the triangle inequality one gets that
\[
\Expect{x\in\power{n}}{\norm{\nabla f(x)}_2}
\geq
\norm{\Expect{x\in\power{n}}{\card{\nabla f(x)}}}_2
\geq
\left\|\left(\card{\widehat{f}(\{1\})},\ldots,\card{\widehat{f}(\{n\})}\right)\right\|_2,
\]
which is equal to $\sqrt{\sum\limits_{i=1}^{n}\widehat{f}(\{i\})^2}$. As the sum of all the squares of all of the Fourier coefficients
is at most $1$, we get that
$\Expect{x\in\power{n}}{\norm{\nabla f(x)}_2}\geq \sum\limits_{i=1}^{n}\widehat{f}(\{i\})^2$. In words, via a simple application of the
triangle inequality we managed to show a lower bound of the quantity we are interested in by the level $1$ Fourier weight of the function
$f$. This is already a rather interesting isoperimetric consequence, albeit rather weak.

We next show how to bootstrap this basic result into Theorems~\ref{thm:tal_iso} and~\ref{thm:EG} via random restrictions.
To get some intuition, suppose that we knew our function $f$ has significant weight on level $d$ (where $d > 1$); can we
utilize the above logic to say anything of interest about lower bounds of $\Expect{x\in\power{n}}{\norm{\nabla f(x)}_2}$? The answer is yes,
and one way to do that is via random restrictions. If $f$ has considerable weight on level $d$, we choose a subset of coordinates
$J\subseteq [n]$ by independently including each coordinate in it with probability $p = 1/d$ and then restrict the coordinates inside $\overline{J}$ to a
uniformly chosen value $z\in \power{\overline{J}}$. In that case, we expect the weight of $f$ to collapse from level $d$ to roughly level $pd$ , which by the choice of the parameter $p$ is roughly the first Fourier level. At that point
we could apply the above logic on the restricted function and get some lower bound for the expected root of the sensitivity of the restricted
function. But how does this last quantity relate to $\Expect{x\in\power{n}}{\norm{\nabla f(x)}_2}$?

Thinking about it for a moment, for a fixed point $x$, if we choose $J$ randomly as above and fix all coordinates inside $\overline{J}$, then only roughly
$1/d$ of the coordinates sensitive on $x$ remain alive, hence we expect the sensitivity to drop by a factor of $1/d$ as a result of this restriction.
Indeed, it can be shown that under random restrictions, the quantity $\Expect{x\in\power{n}}{\norm{\nabla f(x)}_2}$ drops by a factor of $1/\sqrt{d}$.
Thus, combining this observation with the above reasoning suggests that we should get a lower bound of
\[
\Expect{x\in\power{n}}{\norm{\nabla f(x)}_2}\geq \Omega\left(\sqrt{d}\sum\limits_{d\leq \card{S} \leq 2d}\widehat{f}(S)^2\right),
\]
and indeed
this is the bound we establish.

Theorems~\ref{thm:tal_iso} and~\ref{thm:EG} then follow from known Fourier tail bounds. Specifically, it is known that
(1) if $f$ has small variance then most of its Fourier mass lies above level $d=\Omega(\log(1/{\sf var}(f)))$,
and (2) if $f$ has small influences then most Fourier mass lies above level $d = \Omega(\log(1/\sum\limits_{i=1}^{n}I_i[f]^2))$.


\section{Preliminaries}
\paragraph{Notation.} We will use big $O$ notations throughout the paper. Sometimes, it will be easier for us to use
$\ll$ and $\gg$ notations; when we say that $X\ll Y$ we mean that there is an absolute constant $C>0$ such that $X\leq C Y$.
Analogously, when we say that $X\gg Y$ we mean that there is an absolute constant $C>0$ such that $X\geq C Y$.

\subsection{Fourier Analysis over the Hypercube}
We consider the space of real-valued functions $f:\power{n} \to {\mathbb R}$,
equipped with the inner product $\inner{f}{g} = \Expect{x\in_R\power{n}}{f(x)g(x)}$.
It is well-known that the collection of functions $\set{\chi_S}_{S\subseteq[n]}$ defined by
$\chi_S(x) = \prod\limits_{i\in S}{(-1)^{x_i}}$ forms an orthonormal basis, so any function $f$
may be written as $f(x) = \sum\limits_{S\subseteq[n]}{\widehat{f}(S)\chi_S(x)}$ in a unique way,
where
$\widehat{f}(S) = \inner{f}{\chi_S}$.
We also consider $L_p$ norms for $p \geq 1$,
that are defined as $\norm{f}_p= \left(\Expect{x}{\card{f(x)}^p}\right)^{1/p}$.

For $z\in\power{n}$, we denote by $z_{-i}\in\power{n-1}$ the vector resulting from dropping
the $i$th coordinate of $z$. We denote by $(x_i = 1, x_{-i} = z_{-i})$ the $n$-bit vector
which is $1$ on the $i$th coordinate and is equal to $z$ on any other coordinate.
\begin{definition}
  The derivative of $f$ along direction $i\in[n]$
  is $\partial_i f\colon\power{n}\to\mathbb{R}$ defined by
  $\partial_i f(z) = f(x_i = 0, x_{-i} = z_{-i}) - f(x_i = 1, x_{-i} = z_{-i})$.
  The gradient of $f$, $\nabla f\colon\power{n}\to\mathbb{R}^n$, is defined
  as $\nabla f(z) = (\partial_1 f(z),\ldots,\partial_n f(z))$.
\end{definition}
Note that for a Boolean function $f$, for each $x\in\power{n}$, $\partial_i f(x) \neq 0$ if and only if the coordinate $i$
is sensitive on $x$, and in this case its value is either $1$ or $-1$. Thus, $\norm{\nabla f(x)}_2 = \sqrt{s_f(x)}$,
and it will be often convenient for us to use the $\ell_2$-norm of the gradient of $f$ in place of $\sqrt{s_f(x)}$.

\begin{fact}\label{fact:singeton}
  Let $f\colon\power{n}\to\mathbb{R}$, and let $i\in[n]$.
  Then $\partial_i f(x) =
  2\sum\limits_{S\ni i}{\widehat{f}(S)\chi_{S\setminus \set{i}}(x)}$.
  In particular, we have that $\Expect{x}{\partial_i f(x)} = 2\widehat{f}(\set{i})$.
\end{fact}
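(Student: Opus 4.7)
The plan is to verify the identity by expanding $f$ in the Fourier basis and tracking how each character contributes to the difference $f(x_i=0,x_{-i}=z_{-i}) - f(x_i=1,x_{-i}=z_{-i})$. Write $f = \sum_S \widehat{f}(S)\chi_S$ and split the sum according to whether $i\in S$ or not. The characters $\chi_S$ with $i\notin S$ do not depend on the $i$th coordinate, so they take the same value in the two terms being subtracted and cancel. This is the first and easiest observation.

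Next I would handle the characters with $i\in S$. For such an $S$, write $\chi_S(x) = (-1)^{x_i}\chi_{S\setminus\{i\}}(x)$, where $\chi_{S\setminus\{i\}}$ depends only on the coordinates in $[n]\setminus\{i\}$. Plugging in $x_i=0$ gives $\chi_{S\setminus\{i\}}(x)$, while $x_i=1$ gives $-\chi_{S\setminus\{i\}}(x)$; subtracting yields $2\chi_{S\setminus\{i\}}(x)$. Summing over $S\ni i$ with the coefficients $\widehat{f}(S)$ then gives exactly the claimed formula
\[
\partial_i f(x) = 2\sum_{S\ni i}\widehat{f}(S)\chi_{S\setminus\{i\}}(x).
\]

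For the "in particular" part, I take the expectation over $x\in\power{n}$ and use orthogonality of characters: $\Expect{x}{\chi_T(x)}$ equals $1$ when $T=\emptyset$ and $0$ otherwise. Since $S\setminus\{i\}=\emptyset$ exactly when $S=\{i\}$, all but one term in the sum vanishes, yielding $\Expect{x}{\partial_i f(x)} = 2\widehat{f}(\{i\})$.

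There is essentially no obstacle here — the statement is a standard bookkeeping identity in discrete Fourier analysis over $\power{n}$. The only minor care needed is to reconcile the definition of $\partial_i f$, which is phrased as a function on $\power{n}$ through a generic point $z$, with the Fourier expansion viewed on $\power{n}$; but because all characters appearing on the right-hand side are independent of the $i$th coordinate, this presents no real issue.
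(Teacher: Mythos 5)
Your proof is correct and is exactly the standard computation the paper has in mind (the paper states this as a Fact without proof): expand in characters, note the $i\notin S$ terms cancel, factor $\chi_S = (-1)^{x_i}\chi_{S\setminus\{i\}}$ for $i\in S$ to get the factor of $2$, and use orthogonality for the expectation. Nothing further is needed.
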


Next, we define the level $d$ \emph{Fourier weight} of a function $f$, the \emph{approximate level $d$ weight} of a function $f$
and the \emph{Fourier weight of $f$ up to/ above level $d$}.
\begin{definition}
  The level $d$ Fourier weight of $f$ is defined to be $W_{=d}[f] = \sum\limits_{\card{S} = d}{\widehat{f}(S)^2}$. The
  approximate level $d$ weight of a function $f$ is defined to be $W_{\approx d}[f] = \sum\limits_{d\leq j<2d}{W_{=j}[f]}$.
  The weight of $f$ up to level $d$ is defined as $W_{\leq d}[f] = \sum\limits_{\card{S}\leq d}{\widehat{f}(S)^2}$,
  and the weight of $f$ above level $d$ is defined as $W_{> d}[f] = \sum\limits_{\card{S}> d}{\widehat{f}(S)^2}$.
\end{definition}
For a function $f\colon\power{n}\to\mathbb{R}$, we also define the \emph{level at most $d$ part} of $f$, denoted by $f^{\leq d}$,
as $f^{\leq d}(x) = \sum\limits_{\card{S}\leq d} \widehat{f}(S)\chi_S(x)$. We note that by Parseval's equality we have that
$\norm{f^{\leq d}}_2^2 = W_{\leq d}[f]$.
\subsection{Random Restrictions}
For a function $f\colon\{0,1\}^n\to\mathbb{R}$, a set of coordinates $J\subseteq[n]$ and $z\in\power{\overline{J}}$, we define the
\emph{restriction} of
$f$ according to $(J,z)$ as the function $f_{\overline{J}\rightarrow z}\colon \{0,1\}^{J}\to\mathbb{R}$ defined by
\[
f_{\overline{J}\rightarrow z}(y) = f(x_{\overline{J}} = z, x_{J} = y).
\]
We refer to the coordinates of $J$ as ``alive'', and refer to the coordinates of $\overline{J}$ as fixed.
We will often be interested in random restrictions of a function $f$. By that, we mean that the set $J$ is chosen randomly by including in it each
$i\in[n]$ independently with some probability (which is specified), and $z\in\power{\overline{J}}$ is chosen uniformly.
We have the following fact, which establishes a relation between
the approximate level $d$ weight of $f$ and the level $1$ weight of a random restriction of $f$ in which each $i\in [n]$ is included in $J$ with probability
$\frac{1}{2d}$ (see~\cite[Fact 4.1]{kindler2018gaussian} and~\cite[Section 3.1]{KKKMS}).
\begin{fact}\label{fact:restriction}
  Let $f\colon\power{n}\to\mathbb{R}$ and $d\in\mathbb{N}$, let $(J,z)$ be a random restriction
  where each $j\in [n]$ is included in $J$ with probability $\frac{1}{2d}$, and $z\in\power{\bar{J}}$ is chosen uniformly. Then $\Expect{J,z}{W_{=1}[f_{\bar{J}\rightarrow z}]}\gg W_{\approx d}[f]$.
\end{fact}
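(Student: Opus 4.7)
The plan is to compute $\Expect{J,z}{W_{=1}[f_{\bar J \to z}]}$ explicitly in terms of the Fourier coefficients of $f$, and then verify that this quantity picks up a constant fraction of the Fourier mass in the annulus $d \leq |S| < 2d$. The calculation is a standard random-restriction computation; the only thing to be careful about is the constant in the final probability estimate.

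First I would fix $J \subseteq [n]$ and expand $f$ by Fourier, writing each monomial $\chi_T$ in the form $\chi_{T \cap J} \cdot \chi_{T \setminus J}$, so that
\[
f_{\bar J \to z}(y) = \sum_{S \subseteq J}\Bigl(\sum_{T\,:\,T \cap J = S} \widehat{f}(T)\,\chi_{T\setminus J}(z)\Bigr)\chi_S(y).
\]
Thus $\widehat{f_{\bar J \to z}}(S) = \sum_{T\,:\,T\cap J = S} \widehat{f}(T)\,\chi_{T\setminus J}(z)$. Squaring and averaging over a uniform $z\in\{0,1\}^{\bar J}$, the cross-terms vanish by orthogonality of the characters $\chi_{T\setminus J}$, leaving
\[
\Expect{z}{\widehat{f_{\bar J \to z}}(S)^2} \;=\; \sum_{T\,:\,T\cap J = S}\widehat{f}(T)^2.
\]

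Next I would sum over all singletons $S\subseteq J$ and then take the expectation over $J$. Swapping order of summation gives
\[
\Expect{J,z}{W_{=1}[f_{\bar J\to z}]}
\;=\; \sum_{T\subseteq [n]}\widehat{f}(T)^2\,\Prob{J}{|T\cap J|=1}
\;=\; \sum_{T}\widehat{f}(T)^2\cdot |T|\,\tfrac{1}{d}\bigl(1-\tfrac{1}{d}\bigr)^{|T|-1},
\]
using that $|T\cap J|\sim \mathrm{Bin}(|T|,1/d)$.

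It remains to show that $p_k := k\cdot \tfrac{1}{d}(1-\tfrac{1}{d})^{k-1}$ is bounded below by an absolute constant for $k$ in the range $[d,2d)$; then restricting the sum to those $k$ yields $\Expect{J,z}{W_{=1}[f_{\bar J\to z}]}\gg W_{\approx d}[f]$. The case $d=1$ is trivial since $p_1=1$. For $d\geq 2$, one uses that $(1-1/d)^d \geq 1/4$, so $(1-1/d)^{k-1} \geq (1-1/d)^{2d-2} \geq 1/16$, while $k/d \geq 1$; hence $p_k \geq 1/16$. This is the only mildly fiddly step, and it is completely elementary. Overall, no step should present a real obstacle — the proof is essentially a two-line Plancherel calculation on the restricted function, combined with the binomial estimate above.
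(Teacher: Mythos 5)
Your proposal is correct and follows essentially the same route as the paper: expand the Fourier coefficients of the restricted function, apply Parseval over $z$, interchange sums to obtain $\sum_T \widehat{f}(T)^2\,\Prob{J}{\card{T\cap J}=1}$, and lower-bound this probability by a constant for $d\leq\card{T}<2d$. The only difference is that you spell out the binomial estimate $\Prob{J}{\card{T\cap J}=1}=\frac{\card{T}}{d}(1-\frac1d)^{\card{T}-1}\geq 1/16$, which the paper leaves implicit.
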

\begin{proof}
  For a fixed $J$ and $T\subseteq J$, we have that
  $\widehat{f_{\bar{J}\rightarrow z}}(T) = \sum\limits_{S\subseteq \bar{J}}\widehat{f}(T\cup S)\chi_S(z)$. Thus,
  \[
  \Expect{J,z}{W_{=1}[f_{\bar{J}\rightarrow z}]}
  =\Expect{J}{\Expect{z}{\sum\limits_{j\in J}\left(\sum\limits_{S\subseteq\bar{J}}\widehat{f}(S\cup \set{j})\chi_S(z)\right)^2}}
  =\Expect{J}{\sum\limits_{j\in J}\sum\limits_{S\subseteq\bar{J}}\widehat{f}(S\cup \set{j})^2},
  \]
  where the last transition follows by pushing the expectation over $z$ inside and using Parseval's equality. It follows that
  \[
  \Expect{J,z}{W_{=1}[f_{\bar{J}\rightarrow z}]} =
  \Expect{J}{\sum\limits_{T}\widehat{f}(T)^21_{\card{T\cap J} = 1}}
  =\sum\limits_{T}\widehat{f}(T)^2\Prob{J}{\card{T\cap J}=1},
  \]
  and as $\Prob{J}{\card{T\cap J}=1}\gg 1$ for all $T$ whose size is between $d$ and $2d$ and the rest of the summands
  are all non-negative, the proof is concluded.
\end{proof}

\subsection{Bonami's Lemma}
The \emph{degree} of a function $f\colon \power{n}\to\mathbb{R}$ is defined as
${\sf deg}(f) = \max\sett{\card{S}}{\widehat{f}(S)\neq 0}$.
We will need the following consequence of
the Bonami-Beckner-Gross hypercontractive inequality~\cite{Bec75,Bonami,grossLogSob} (see also~\cite{ODonnell}), showing that the $4$-norm of a low-degree function $f$ is comparable to its $2$-norm.
\begin{thm}\label{thm:hypercontractivity}
  Let $f\colon\power{n}\to\mathbb{R}$ be a function of degree at most $d$. Then
  $\norm{f}_4\leq \sqrt{3}^d\norm{f}_2$.
\end{thm}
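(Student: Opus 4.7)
The plan is to prove the standard two-function form of Bonami-Beckner hypercontractivity -- that for $\rho=1/\sqrt{3}$ and every $g\colon\power{n}\to\mathbb{R}$ one has $\norm{T_\rho g}_4 \leq \norm{g}_2$, where $T_\rho$ is the noise operator acting on Fourier coefficients by $T_\rho\chi_S = \rho^{\card{S}}\chi_S$ -- and then to deduce the theorem. Given this hypercontractive estimate and an $f$ of degree at most $d$, I would set $g = \sum_S \sqrt{3}^{\card{S}}\widehat{f}(S)\chi_S$ so that $T_\rho g = f$. By Parseval and the degree bound, $\norm{g}_2^2 = \sum_S 3^{\card{S}} \widehat{f}(S)^2 \leq 3^d \norm{f}_2^2$, whence $\norm{f}_4 = \norm{T_\rho g}_4 \leq \norm{g}_2 \leq \sqrt{3}^d \norm{f}_2$, as claimed.

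The hypercontractive inequality itself I would prove by induction on $n$. The base case $n=1$ is a direct computation: writing $g(x) = a + b(-1)^x$, one has $\norm{T_\rho g}_4^4 = a^4 + 6\rho^2 a^2 b^2 + \rho^4 b^4$ and $\norm{g}_2^4 = (a^2+b^2)^2 = a^4 + 2a^2b^2 + b^4$; with $\rho^2 = 1/3$ the middle-term coefficients match exactly ($6\rho^2 = 2$), and $\rho^4 = 1/9 \leq 1$ handles the $b^4$ term, so the inequality holds. For the induction step I decompose $g(x_1,y) = a(y) + b(y)(-1)^{x_1}$ for $y \in \power{n-1}$, so that $T_\rho g(x_1,y) = (T'_\rho a)(y) + \rho(T'_\rho b)(y)(-1)^{x_1}$, where $T'_\rho$ is the noise operator on the last $n-1$ coordinates. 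Integrating out $x_1$ first gives
\[
\norm{T_\rho g}_4^4 = \norm{T'_\rho a}_4^4 + 6\rho^2 \Expect{y}{(T'_\rho a)(y)^2 (T'_\rho b)(y)^2} + \rho^4 \norm{T'_\rho b}_4^4,
\]
the cross term is bounded via Cauchy-Schwarz by $\norm{T'_\rho a}_4^2 \norm{T'_\rho b}_4^2$, and the inductive hypothesis supplies $\norm{T'_\rho a}_4 \leq \norm{a}_2$ and $\norm{T'_\rho b}_4 \leq \norm{b}_2$.

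The resulting upper bound $\norm{a}_2^4 + 6\rho^2 \norm{a}_2^2 \norm{b}_2^2 + \rho^4 \norm{b}_2^4$ is precisely $\norm{T_\rho \tilde g}_4^4$ for the one-variable test function $\tilde g(x_1) = \norm{a}_2 + \norm{b}_2(-1)^{x_1}$, so applying the base case to $\tilde g$ yields $\norm{T_\rho g}_4^4 \leq \norm{\tilde g}_2^4 = (\norm{a}_2^2 + \norm{b}_2^2)^2 = \norm{g}_2^4$, closing the induction. I expect the main technical step to be this reduction of the induction to a one-variable inequality via Cauchy-Schwarz applied to the cross term; once that bookkeeping is set up, the base case does the real work, and the value $\rho = 1/\sqrt{3}$ emerges as the unique choice that makes the middle-term coefficients align between $\norm{T_\rho g}_4^4$ and $\norm{g}_2^4$.
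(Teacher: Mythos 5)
Your proof is correct: the paper states this theorem without proof, citing it as a standard consequence of the Bonami--Beckner inequality, and your argument---establishing $\norm{T_{1/\sqrt{3}}\, g}_4\leq\norm{g}_2$ by induction on $n$, with Cauchy--Schwarz reducing the inductive step to the one-variable computation, and then applying it to $g=\sum_S \sqrt{3}^{\card{S}}\widehat{f}(S)\chi_S$---is exactly the standard derivation found in the references the paper cites. (One cosmetic remark: what you call the ``two-function form'' is usually called the one-function, or $(2,4)$-, form of hypercontractivity; this does not affect the substance of the argument.)
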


\section{Isoperimetric Inequalities on the Hypercube}\label{sec:classic_iso}
In this section, we prove Theorems~\ref{thm:tal_iso} and~\ref{thm:EG}. We begin with the following very crude bound on the Talagrand boundary,
and then improve it using random restrictions.
\begin{lemma}\label{lemma:tal_lvl1}
  Let $f\colon\power{n}\to\mathbb{R}$. Then
  $\Expect{}{\norm{\nabla f(x)}_2 }\geq 2\sqrt{W_{=1}[f]}$.
\end{lemma}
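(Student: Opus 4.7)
The plan is to follow exactly the outline sketched in the introduction: push the expectation inside the $\ell_2$-norm via Minkowski's inequality, and then use the fact that each coordinate of the expected absolute gradient is controlled by the corresponding singleton Fourier coefficient.

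First I would rewrite the quantity of interest. Since $\partial_i f(x) \in \{-1,0,1\}$, replacing each entry of $\nabla f(x)$ by its absolute value does not change the $\ell_2$-norm. So if we let $V(x) = (|\partial_1 f(x)|, \ldots, |\partial_n f(x)|) \in \mathbb{R}^n$, we have $\|\nabla f(x)\|_2 = \|V(x)\|_2$. The key inequality is then Minkowski's inequality (or equivalently Jensen's inequality applied to the convex function $\|\cdot\|_2$ on $\mathbb{R}^n$), which yields
\[
\Expect{x}{\|\nabla f(x)\|_2} = \Expect{x}{\|V(x)\|_2} \geq \|\Expect{x}{V(x)}\|_2 = \sqrt{\sum_{i=1}^n \left(\Expect{x}{|\partial_i f(x)|}\right)^2}.
\]

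Next I would bound each coordinate using the triangle inequality for expectations, namely $\Expect{x}{|\partial_i f(x)|} \geq |\Expect{x}{\partial_i f(x)}|$, and then invoke Fact~\ref{fact:singeton}, which says that this last quantity equals $2|\widehat{f}(\{i\})|$. Plugging this into the display above gives
\[
\Expect{x}{\|\nabla f(x)\|_2} \geq \sqrt{\sum_{i=1}^n 4\,\widehat{f}(\{i\})^2} = 2\sqrt{W_{=1}[f]},
\]
which is the desired bound.

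There is essentially no obstacle here: the only observation that requires any care is recognizing that the natural way to get the $\ell_2$-norm on the right-hand side is to work coordinatewise with absolute values before taking the Minkowski step, rather than applying Jensen directly to $\nabla f(x)$ (which would produce $\|(\widehat{f}(\{1\}),\ldots,\widehat{f}(\{n\}))\|_2$ only after passing through absolute values anyway, but makes the role of the triangle inequality less transparent). Everything else is a one-line application of known identities.
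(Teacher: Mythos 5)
Your proof is correct and is essentially the paper's argument: Minkowski (convexity of the $\ell_2$-norm) followed by $\Expect{x}{\partial_i f(x)} = 2\widehat{f}(\set{i})$. The detour through absolute values is unnecessary (the paper applies Minkowski directly to $\nabla f$, whose coordinatewise expectation already equals $2\widehat{f}(\set{i})$), and note that your justification ``$\partial_i f(x)\in\{-1,0,1\}$'' only holds for Boolean $f$ while the lemma is stated for real-valued $f$ --- but that step is harmless since the $\ell_2$-norm is sign-invariant anyway.
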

\begin{proof}
By the triangle inequality and Fact~\ref{fact:singeton} we have
  \[
  \Expect{x}{\norm{\nabla f(x)}_2 }
  \geq \norm{\Expect{x}{\nabla f(x)}}_2
  = \norm{2(\widehat{f}(\set{1}),\ldots,\widehat{f}(\set{n}))}_2
  = 2\sqrt{\sum\limits_{i=1}^{n}\widehat{f}(\set{i})^2}
  = 2\sqrt{W_{=1}[f]}.
  \qedhere
 \]
\end{proof}

\begin{lemma}\label{lemma:tal_lvld_exact}
  Let $d\in\mathbb{N}$, and $f\colon\power{n}\to\power{}$. Then
  $\Expect{}{\norm{\nabla f(x)}_2 }\gg \sqrt{d}W_{\approx d}[f]$.
\end{lemma}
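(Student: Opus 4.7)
The plan is to bootstrap Lemma~\ref{lemma:tal_lvl1} by applying it to random restrictions of $f$, and to then tie both sides of the resulting inequality back to the original function. Concretely, draw $(J,z)$ as in Fact~\ref{fact:restriction}: include each $j\in[n]$ in $J$ independently with probability $1/d$, and sample $z$ uniformly from $\power{\bar J}$. The argument then consists of (a) upper-bounding the expected Talagrand boundary of the restriction by $\Expect{x}{\norm{\nabla f(x)}_2}/\sqrt{d}$, (b) lower-bounding the same quantity by $W_{\approx d}[f]$ via Lemma~\ref{lemma:tal_lvl1} and Fact~\ref{fact:restriction}, and (c) combining.

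For the upper bound, I would unpack definitions to observe that, for a fixed $x=(y,z)\in\power{n}$, the entries of $\nabla f_{\bar J\to z}(y)$ are exactly $(\partial_j f(x))_{j\in J}$, so $\norm{\nabla f_{\bar J\to z}(y)}_2^2=\sum_{j\in J}(\partial_j f(x))^2=\card{J\cap\set{j:\partial_j f(x)\neq 0}}$. Since $\Prob{J}{j\in J}=1/d$, Jensen's inequality applied to the concave square root over the random $J$ gives $\Expect{J}{\norm{\nabla f_{\bar J\to z}(y)}_2}\leq \norm{\nabla f(x)}_2/\sqrt{d}$, and averaging over $x$ (which is uniform as $(y,z)$ ranges uniformly) produces the desired upper bound. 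This is the quantitative form of the intuition from the proof overview: under a restriction of density $1/d$ the number of sensitive coordinates drops by a factor $d$ in expectation, so its square root drops by $\sqrt{d}$.

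For the lower bound, applying Lemma~\ref{lemma:tal_lvl1} to each restricted function yields $\Expect{y}{\norm{\nabla f_{\bar J\to z}(y)}_2}\geq 2\sqrt{W_{=1}[f_{\bar J\to z}]}$. The main obstacle I anticipate is that Fact~\ref{fact:restriction} only controls $\Expect{J,z}{W_{=1}[f_{\bar J\to z}]}$, while what I now need is a lower bound on $\Expect{J,z}{\sqrt{W_{=1}[f_{\bar J\to z}]}}$, and Jensen for the square root goes the wrong way. The observation that bypasses this is that each restriction $f_{\bar J\to z}$ is Boolean, so Parseval forces
\[
W_{=1}[f_{\bar J\to z}] \leq \sum_{S}\widehat{f_{\bar J\to z}}(S)^2 = \norm{f_{\bar J\to z}}_2^2 \leq 1,
\]
and hence $\sqrt{W_{=1}[f_{\bar J\to z}]}\geq W_{=1}[f_{\bar J\to z}]$ pointwise in $(J,z)$. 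Taking expectations and invoking Fact~\ref{fact:restriction} gives $\Expect{J,z,y}{\norm{\nabla f_{\bar J\to z}(y)}_2}\gg W_{\approx d}[f]$.

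Combining the upper and lower bounds produces $\Expect{x}{\norm{\nabla f(x)}_2}/\sqrt{d}\gg W_{\approx d}[f]$, which rearranges to the claimed inequality. The only non-routine step is the Booleanness trick, which lets the square root be absorbed essentially for free; for real-valued $f$ this shortcut is unavailable and one would instead need a hypercontractive bound on $\Expect{J,z}{W_{=1}[f_{\bar J\to z}]^2}$ via Theorem~\ref{thm:hypercontractivity}, together with a Paley--Zygmund argument, in order to lower-bound $\Expect{J,z}{\sqrt{W_{=1}[f_{\bar J\to z}]}}$ in terms of $\Expect{J,z}{W_{=1}[f_{\bar J\to z}]}$.
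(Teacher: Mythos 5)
Your proposal is correct and follows essentially the same route as the paper: the same $1/d$-density random restriction, the same Jensen/Cauchy--Schwarz upper bound $\Expect{J}{\norm{\nabla f_{\bar J\to z}(y)}_2}\leq\norm{\nabla f(x)}_2/\sqrt{d}$, and the same use of Booleanness to pass from $\sqrt{W_{=1}[f_{\bar J\to z}]}$ to $W_{=1}[f_{\bar J\to z}]$ before invoking Fact~\ref{fact:restriction}. The ``Booleanness trick'' you highlight is exactly the step $\sqrt{W_{=1}[f_{\bar{J}\rightarrow z}]}\geq W_{=1}[f_{\bar{J}\rightarrow z}]$ appearing in the paper's argument.
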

\begin{proof}
  Let $(J,z)$ be a random restriction for $f$, where each $j\in[n]$ is included in $J$ independently with probability
  $\frac{1}{2d}$, and $z\in\power{\bar{J}}$ is sampled uniformly. Fix $x\in \power{n}$, and note that
  \[
    \Expect{J}{\norm{\nabla f_{\bar{J}\rightarrow x_{\bar{J}}}(x_J)}_2}
    \leq \sqrt{\Expect{J}{\norm{\nabla f_{\bar{J}\rightarrow x_{\bar{J}}}(x_J)}^2_2}}
    =\sqrt{\Expect{J}{\sum\limits_{j}{\card{\partial_j f(x)}^2 1_{j\in J}}}}
    =\frac{\norm{\nabla f(x)}_2}{\sqrt{2d}}.
  \]
  Therefore,
  \begin{equation}\label{eq1}
  \frac{1}{\sqrt{2d}}\Expect{}{\norm{\nabla f(x)}_2}\geq \Expect{J,z}{\Expect{x\in\power{J}}{\norm{\nabla f_{\bar{J}\rightarrow z}(x)}_2}}.
  \end{equation}
  On the other hand, for each $J$ and $z\in\power{\bar{J}}$ we may apply Lemma~\ref{lemma:tal_lvl1} on $f_{\bar{J}\rightarrow z}$ to get that
  \[
  \Expect{x\in\power{J}}{\norm{\nabla f_{\bar{J}\rightarrow z}(x)}_2}\gg \sqrt{W_{=1}[f_{\bar{J}\rightarrow z}]}\geq W_{=1}[f_{\bar{J}\rightarrow z}].
  \]
  Taking expectation over $J$ and $z$ we get that
  \[
  \Expect{J,z}{\Expect{x\in\power{J}}{\norm{\nabla f_{\bar{J}\rightarrow z}(x)}_2}}
  \gg \Expect{J,z}{W_{=1}[f_{\bar{J}\rightarrow z}]}
  \gg W_{\approx d}[f],
  \]
  where the last inequality is by Fact~\ref{fact:restriction}. Combining this with equation~\eqref{eq1} finishes the proof.
\end{proof}

\begin{lemma}\label{lemma:tal_lvld_above}
  Let $d\in\mathbb{N}$, and $f\colon\power{n}\to\power{}$. Then
  $\Expect{}{\norm{\nabla f(x)}_2 }\gg \sqrt{d}W_{\geq d}[f]$.
\end{lemma}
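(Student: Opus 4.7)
The plan is to derive Lemma~\ref{lemma:tal_lvld_above} from Lemma~\ref{lemma:tal_lvld_exact} by a simple dyadic decomposition of the Fourier weight above level $d$. The key observation is that the levels $[d,\infty)$ partition cleanly into dyadic scales $[2^k d, 2^{k+1} d)$ for $k = 0, 1, 2, \ldots$, and on each scale Lemma~\ref{lemma:tal_lvld_exact} already gives us a bound of the form $\Expect{}{\norm{\nabla f(x)}_2} \gg \sqrt{2^k d}\, W_{\approx 2^k d}[f]$.

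Let $L := \Expect{}{\norm{\nabla f(x)}_2}$. Applying Lemma~\ref{lemma:tal_lvld_exact} with the scale parameter $2^k d$ for every integer $k \geq 0$ yields $W_{\approx 2^k d}[f] \ll L / \sqrt{2^k d}$. Then, using the dyadic partition
\[
W_{\geq d}[f] \;=\; \sum_{k \geq 0} W_{\approx 2^k d}[f],
\]
we obtain
\[
W_{\geq d}[f] \;\ll\; \sum_{k \geq 0} \frac{L}{\sqrt{2^k d}} \;=\; \frac{L}{\sqrt{d}} \sum_{k \geq 0} 2^{-k/2} \;\ll\; \frac{L}{\sqrt{d}},
\]
since the geometric series $\sum_{k \geq 0} 2^{-k/2}$ converges. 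Rearranging yields $L \gg \sqrt{d}\, W_{\geq d}[f]$, which is exactly the claim.

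There is no real obstacle here; the proof is essentially a one-line consequence of Lemma~\ref{lemma:tal_lvld_exact} together with the convergence of a geometric series. The only minor subtlety worth checking is that the multiplicative constants hidden in the $\ll$ symbol do not depend on $k$, which is immediate from inspecting the proof of Lemma~\ref{lemma:tal_lvld_exact} (the implicit constant there comes only from Lemma~\ref{lemma:tal_lvl1} and Fact~\ref{fact:restriction}, both of which give absolute constants independent of $d$).
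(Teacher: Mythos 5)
Your proof is correct and is essentially identical to the paper's: both apply Lemma~\ref{lemma:tal_lvld_exact} at the dyadic scales $2^k d$, use that $W_{\geq d}[f]=\sum_{k\geq 0}W_{\approx 2^k d}[f]$, and sum the resulting geometric series. Your remark that the implicit constant is absolute and independent of $k$ is the right (and only) point to check, and it holds.
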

\begin{proof}
  Applying Lemma~\ref{lemma:tal_lvld_exact} with $d 2^{j}$ in place of $d$ for $j=0,1,2,\ldots$, we get that
  \[
    2^{-j/2}\Expect{}{\norm{\nabla f(x)}_2 }
    \gg \sqrt{d}W_{\approx 2^{j} d}[f],
  \]
  and summing over $j$ yields the result.
\end{proof}

\subsection{Proof of Theorem~\ref{thm:tal_iso}}
  If ${\sf var}(f)\geq 2^{-16}$, then it is enough to prove a lower bound of $\gg {\sf var}(f)$.
  Applying Lemma~\ref{lemma:tal_lvld_exact} on
  $d$'s that are powers of $2$ between $1$ and $n$, we get that
  \[
  \sum\limits_{j=0}^{\log n}\frac{1}{\sqrt{2^{j}}}{\Expect{}{\norm{\nabla f(x)}_2 }}
  \gg\sum\limits_{j=0}^{\log n}{W_{\approx 2^j}[f]}
  ={\sf var}(f),
  \]
  and by geometric sum, the left hand side is $\ll \Expect{}{\norm{\nabla f(x)}_2 }$.

  We thus assume that ${\sf var}(f)\leq 2^{-16}$, and without loss of generality the majority value of $f(x)$ is $0$.
  Thus, $\E[f]\leq 2{\sf var}(f)$. Setting $d = \frac{1}{8} \log(1/{\sf var}(f))$, we have that
  \[
  \sum\limits_{0<\card{S}\leq d}\widehat{f}(S)^2
  \leq
  \norm{f^{\leq d}}_2^2
  =\inner{f}{f^{\leq d}}
  \leq \norm{f}_{4/3}\norm{f^{\leq d}}_4
  \leq \sqrt{3}^d \E[f]^{3/4}\norm{f}_2
  \leq 2^d\E[f]^{5/4}
  \leq
  0.9 {\sf var}(f),
  \]
  where we used the bound $\norm{f^{\leq d}}_4\leq\sqrt{3}^d\norm{f^{\leq d}}_2\leq \sqrt{3}^d\norm{f}_2$ which
  follows from Theorem~\ref{thm:hypercontractivity} and Parseval's equality.
  Therefore, $W_{\geq d}[f]\geq 0.1 {\sf var}(f)$, and by Lemma~\ref{lemma:tal_lvld_above} we conclude the result.\qed

\subsection{Proof of Theorem~\ref{thm:EG}}
Denote $M[f] = \sum\limits_{i=1}^{n} I_i[f]^2$; we need the following result
due to~\cite{Tal3,BKS,KK} (the precise version we use is due to Keller and Kindler~\cite{KK}).
\begin{thm}\label{thm:KK}
There are $c_1,c_2>0$, such that for all $f\colon \power{n}\to\power{}$ we have that
\[
W_{\leq c_1\log(1/M[f])}[f]\leq M[f]^{c_2}.
\]
\end{thm}

We may now give the proof of Theorem~\ref{thm:EG}.
\begin{proof}[Proof of Theorem~\ref{thm:EG}]
  If $M[f]\geq {\sf var}(f)^{2/c_2}$, then the result follows from
  Theorem~\ref{thm:tal_iso}. Otherwise, fix $c_1,c_2$ from Theorem~\ref{thm:KK} and set $d = c_1\log(1/M[f])$.
  By Theorem~\ref{thm:KK}
  we have that
  $W_{\leq d}[f]\leq M[f]^{c_2}\leq {\sf var}(f)^2\leq \frac{1}{2} {\sf var}(f)$,
  and so $W_{>d}[f]\geq \frac{1}{2} {\sf var}(f)$, and the result follows from
  Lemma~\ref{lemma:tal_lvld_above}.
\end{proof}

We remark that Theorem~\ref{thm:EG} implies a stability result for the KKL theorem. Namely,
if all influences of a Boolean function are at most $O(\log n/n)$, then $f$ must have constant-size
vertex boundary. More precisely:
\begin{corollary}\label{cor:stab}
  For any $K>0$ there are $c,\delta>0$ such that the following holds.
  Suppose that for $f\colon\power{n}\to\power{}$ we have that $\max_i I_i[f]\leq K{\sf var}(f)\frac{\log n}{n}$.
  Then
  \[
  \Prob{x}{\norm{\nabla f(x)}_2\geq c{\sf var}(f)\sqrt{\log n}}\geq \delta{\sf var}(f).
  \]
\end{corollary}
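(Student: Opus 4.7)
The plan is to combine Theorem~\ref{thm:EG} with a Paley--Zygmund style argument: the hypothesis bounds both $M[f] := \sum_i I_i[f]^2$ (which feeds into Theorem~\ref{thm:EG}) and the expected sensitivity $\Expect{x}{s_f(x)} = \sum_i I_i[f]$ (which will let us pass from an expectation bound to a probability bound). Throughout I use the identity $\|\nabla f(x)\|_2 = \sqrt{s_f(x)}$. For the expectation bound, since $I_i[f] \leq \max_j I_j[f]$ for every $i$, the hypothesis $\max_i I_i[f] \leq K {\sf var}(f) \log n / n$ gives
\[
M[f] \leq n \cdot (\max_i I_i[f])^2 \leq K^2 {\sf var}(f)^2 (\log n)^2 / n.
\]
Using ${\sf var}(f) \leq 1$, this yields $\log(1 + 1/M[f]) \geq \log n - 2\log\log n - O_K(1)$, which is at least $\half \log n$ once $n$ exceeds some threshold $n_0 = n_0(K)$. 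Plugging into Theorem~\ref{thm:EG} produces $\Expect{x}{\sqrt{s_f(x)}} \geq c_0 {\sf var}(f)\sqrt{\log n}$ for some absolute constant $c_0 > 0$.

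For the probability bound, observe that the hypothesis also yields $\Expect{x}{s_f(x)} = \sum_i I_i[f] \leq n \cdot \max_i I_i[f] \leq K {\sf var}(f) \log n$. Set $T = (c_0/2) {\sf var}(f)\sqrt{\log n}$; splitting $\Expect{x}{\sqrt{s_f(x)}}$ at $T$ shows that $\Expect{x}{\sqrt{s_f(x)} \cdot 1_{\sqrt{s_f(x)} \geq T}} \geq (c_0/2) {\sf var}(f)\sqrt{\log n}$, and Cauchy--Schwarz gives
\[
\left(\Expect{x}{\sqrt{s_f(x)} \cdot 1_{\sqrt{s_f(x)} \geq T}}\right)^2 \leq \Expect{x}{s_f(x)} \cdot \Prob{x}{\sqrt{s_f(x)} \geq T}.
\]
Combining these two inequalities yields $\Prob{x}{\sqrt{s_f(x)} \geq T} \geq \frac{c_0^2}{4K} {\sf var}(f)$, which is the corollary with $c = c_0/2$ and $\delta = c_0^2/(4K)$.

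The only loose end is the case $n \leq n_0(K)$. Here one can choose $c$ small enough that $c {\sf var}(f) \sqrt{\log n} < 1$, so the target event reduces to $\{s_f(x) \geq 1\}$; the same Cauchy--Schwarz argument applied to the basic Talagrand inequality $\Expect{x}{\sqrt{s_f(x)}} \gg {\sf var}(f)$ and the trivial bound $\Expect{x}{s_f(x)} \leq n \leq n_0(K)$ then gives $\Prob{x}{s_f(x) \geq 1} \gg {\sf var}(f)$. The only step that requires any real thought is the Paley--Zygmund/Cauchy--Schwarz reduction, so I do not expect a genuine obstacle beyond verifying the constants.
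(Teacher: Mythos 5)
Your proof is correct and follows essentially the same route as the paper: bound $M[f]=\sum_i I_i[f]^2$ using the hypothesis so that Theorem~\ref{thm:EG} gives $\Expect{x}{\norm{\nabla f(x)}_2}\gg_K {\sf var}(f)\sqrt{\log n}$, bound $\Expect{x}{s_f(x)}=\sum_i I_i[f]\leq K{\sf var}(f)\log n$, and conclude by Paley--Zygmund, which you simply unroll as a threshold split plus Cauchy--Schwarz instead of quoting. One small fix in your $n\leq n_0(K)$ patch: the trivial bound $\Expect{x}{s_f(x)}\leq n_0$ only yields $\Prob{x}{s_f(x)\geq 1}\gg {\sf var}(f)^2/n_0$, which is quadratic in ${\sf var}(f)$, so you should instead reuse the hypothesis bound $\Expect{x}{s_f(x)}\leq K{\sf var}(f)\log n$ there as well, exactly as in the main case, to recover the claimed linear dependence $\delta\,{\sf var}(f)$.
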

\begin{proof}
  Note that by assumption, $M[f]\leq K^2 \frac{\log^2 n}{n}\leq \frac{K^2}{\sqrt{n}}$,
  so Theorem~\ref{thm:EG} implies that
  $Z\defeq \Expect{x}{\norm{\nabla f(x)}_2}\geq K' {\sf var}(f) \sqrt{\log n}$ for some $K'$ depending only on
  $K$. Also, we have that
  \[
  \Expect{x}{\norm{\nabla f(x)}_2^2} = I_1[f]+\ldots+I_n[f]\leq K {\sf var}(f) \log n.
  \]
  It follows from the Paley-Zygmund inequality that
  \[
  \Prob{}{\norm{\nabla f(x)}_2\geq \frac{1}{2} Z}
  \geq \frac{1}{4}\frac{Z^2}{\Expect{x}{\norm{\nabla f(x)}_2^2}}
  \geq \frac{1}{4}\frac{K'^2}{K} {\sf var}(f),
  \]
  and the claim is proved for $c = \frac{1}{2} K'$ and $\delta = \frac{1}{4}\frac{K'^2}{K}$.
\end{proof}

\subsection{Extensions}
\subsubsection{Other $L_p$-norms}
Lemma~\ref{lemma:tal_lvl1} applies not only in the case of $L_2$-norms, but rather for any $L_p$ norm:
\begin{lemma}\label{lemma:tal_lvl1_L_p}
  Let $1\leq p\leq 2$ and $f\colon\power{n}\to\power{}$. Then
  $\Expect{}{\norm{\nabla f(x)}_p }\geq W_{=1}[f]^{1/p}$.
\end{lemma}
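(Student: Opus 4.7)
The plan is to mimic the $p=2$ argument from Lemma~\ref{lemma:tal_lvl1}, but with two extra comparisons of $\ell_p$ norms at the end. Concretely, I would first apply the triangle inequality for $\ell_p$-norms — equivalently, Jensen's inequality for the convex map $v\mapsto \norm{v}_p$ — to pull the expectation inside the norm. This gives
\[
\Expect{x}{\norm{\nabla f(x)}_p} \geq \norm{\Expect{x}{\nabla f(x)}}_p.
\]
By Fact~\ref{fact:singeton}, $\Expect{x}{\partial_i f(x)} = 2\widehat{f}(\{i\})$, so the right-hand side equals $2\bigl(\sum_{i=1}^n |\widehat{f}(\{i\})|^p\bigr)^{1/p}$.

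Next I would convert this $\ell_p$-sum of singleton coefficients into the quantity $W_{=1}[f]^{1/p} = \bigl(\sum_i \widehat{f}(\{i\})^2\bigr)^{1/p}$ using two elementary monotonicity facts. First, since $p\leq 2$, the standard $\ell_p\supseteq \ell_2$ containment for finite sequences yields
\[
\Bigl(\sum_{i=1}^n |\widehat{f}(\{i\})|^p\Bigr)^{1/p}
\geq \Bigl(\sum_{i=1}^n \widehat{f}(\{i\})^2\Bigr)^{1/2}
= W_{=1}[f]^{1/2}.
\]
Second, because $f$ is $\{-1,1\}$-valued we have $\norm{f}_2 = 1$ and hence $W_{=1}[f]\leq 1$; combined with $1/2\leq 1/p$, this yields $W_{=1}[f]^{1/2}\geq W_{=1}[f]^{1/p}$.

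Chaining the three inequalities gives $\Expect{x}{\norm{\nabla f(x)}_p} \geq 2 W_{=1}[f]^{1/p}\geq W_{=1}[f]^{1/p}$, as desired. There is no real obstacle here; the only thing to be careful about is that both conversions — from $\ell_p$ to $\ell_2$ on the coefficients, and from the exponent $1/2$ to $1/p$ on the weight — point in the correct direction, which is precisely why the hypotheses $p\leq 2$ and $f(x)\in\{-1,1\}$ (giving $W_{=1}[f]\leq 1$) are both being used.
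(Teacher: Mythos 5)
Your proof is correct, but it takes a genuinely different route from the paper's. The paper's proof exploits the fact that the entries of $\nabla f(x)$ lie in $\{-1,0,1\}$, so that pointwise $\norm{\nabla f(x)}_p = s_f(x)^{1/p} = \norm{\nabla f(x)}_2^{2/p}$; it then applies Jensen to the convex scalar map $t\mapsto t^{2/p}$ (convex since $2/p\geq 1$) and invokes Lemma~\ref{lemma:tal_lvl1} as a black box. You instead rerun the vector-valued Jensen/triangle-inequality argument of Lemma~\ref{lemma:tal_lvl1} directly in the $\ell_p$-norm, and then convert $\bigl(\sum_i|\widehat{f}(\{i\})|^p\bigr)^{1/p}$ into $W_{=1}[f]^{1/p}$ via the monotonicity $\norm{v}_p\geq\norm{v}_2$ for $p\leq 2$ together with $W_{=1}[f]\leq 1$. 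Both arguments are equally short and both correctly use the two hypotheses $p\leq 2$ and $\norm{f}_\infty\le 1$. Yours has the mild advantage of not needing the gradient entries to be Boolean --- only $W_{=1}[f]\leq 1$ enters, so it applies verbatim to any $f$ with $\norm{f}_2\leq 1$ --- while the paper's route passes through $\Expect{x}{s_f(x)^{1/p}}$, which is the form in which the lemma is actually deployed later (Lemma~\ref{lemma:tal_lvld_above_alpha}). As a cosmetic remark, your chain in fact yields the slightly stronger bound $\Expect{x}{\norm{\nabla f(x)}_p}\geq 2\,W_{=1}[f]^{1/p}$, matching the factor $2$ of Lemma~\ref{lemma:tal_lvl1}, before you discard it in the last step.
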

\begin{proof}
By Jensen's inequality and Lemma~\ref{lemma:tal_lvl1} we have
  \[
  \Expect{x}{\norm{\nabla f(x)}_p }
  =\Expect{x}{\norm{\nabla f(x)}_2^{2/p}}
  \geq \Expect{x}{\norm{\nabla f(x)}_2}^{2/p}
  \geq W_{=1}[f]^{1/p}.
  \qedhere
  \]
\end{proof}
As in Lemma~\ref{lemma:tal_lvld_exact}, the previous lemma quickly leads to the following conclusion.
\begin{lemma}\label{lemma:tal_lvld_above_alpha}
  Let $d\in\mathbb{N}$, $\alpha\in[1/2, 1]$ and $f\colon\power{n}\to\power{}$.
  Then $\Expect{}{s_f(x)^{\alpha}}\gg d^{\alpha} W_{\geq d}[f]$.
\end{lemma}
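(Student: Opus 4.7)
The plan is to mimic the proofs of Lemmas~\ref{lemma:tal_lvld_exact} and~\ref{lemma:tal_lvld_above}, but using the $L_p$ version of the basic bound (Lemma~\ref{lemma:tal_lvl1_L_p}) with $p=1/\alpha\in[1,2]$, under the identity $\norm{\nabla f(x)}_p = s_f(x)^\alpha$ (which holds because $\partial_j f\in\{-1,0,1\}$). I will first establish the approximate version
\[
\Expect{x}{s_f(x)^\alpha}\gg d^\alpha W_{\approx d}[f],
\]
and then upgrade to $W_{\geq d}[f]$ by a dyadic geometric sum exactly as in Lemma~\ref{lemma:tal_lvld_above}.

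For the approximate version, use the same random restriction as in Lemma~\ref{lemma:tal_lvld_exact}: include each $j\in[n]$ in $J$ independently with probability $1/d$ and pick $z\in\power{\bar{J}}$ uniformly. Since partial derivatives are unchanged under restriction on $J$,
\[
\Expect{J}{\norm{\nabla f_{\bar{J}\rightarrow x_{\bar{J}}}(x_J)}_p^p} = \Expect{J}{\sum_{j\in J}\card{\partial_j f(x)}} = s_f(x)/d,
\]
and Jensen's inequality applied to the concave map $t\mapsto t^{1/p}$ yields $\Expect{J}{\norm{\nabla f_{\bar{J}\rightarrow x_{\bar{J}}}(x_J)}_p}\leq s_f(x)^\alpha/d^\alpha$. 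Integrating over $x$ and switching order of averaging gives
\[
d^\alpha\,\Expect{J,z,y}{\norm{\nabla f_{\bar{J}\rightarrow z}(y)}_p} \leq \Expect{x}{s_f(x)^\alpha}.
\]

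For the matching lower bound on the left side, apply Lemma~\ref{lemma:tal_lvl1_L_p} to each restricted function $f_{\bar{J}\rightarrow z}$ to get $\Expect{y}{\norm{\nabla f_{\bar{J}\rightarrow z}(y)}_p}\geq W_{=1}[f_{\bar{J}\rightarrow z}]^{\alpha}$. The key (and essentially only nonroutine) observation is that since $f_{\bar{J}\rightarrow z}$ is $\power{}$-valued, Parseval forces $W_{=1}[f_{\bar{J}\rightarrow z}]\leq 1$, so $W_{=1}[f_{\bar{J}\rightarrow z}]^\alpha\geq W_{=1}[f_{\bar{J}\rightarrow z}]$ whenever $\alpha\leq 1$. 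Averaging over $(J,z)$ and invoking Fact~\ref{fact:restriction} then gives $\Expect{J,z}{W_{=1}[f_{\bar{J}\rightarrow z}]}\gg W_{\approx d}[f]$, completing the approximate bound. Finally, applying this with $d\cdot 2^{j}$ in place of $d$ and summing the resulting geometric series $\sum_j 2^{-j\alpha}$ (a universally bounded constant precisely because $\alpha\geq 1/2$) replaces $W_{\approx d}$ with $W_{\geq d}$, yielding the claim. The main trap to avoid is pushing the $\alpha$-power outside the expectation by Jensen in the concave direction after averaging, which would only yield $W_{\approx d}[f]^\alpha$ on the right; the trivial estimate $W_{=1}\leq 1$ is what circumvents this and produces the linear dependence on $W_{\geq d}[f]$.
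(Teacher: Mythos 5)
Your proof is correct and is exactly the argument the paper intends (the paper only gestures at it via ``as in Lemma~\ref{lemma:tal_lvld_exact}''): combine Lemma~\ref{lemma:tal_lvl1_L_p} with the random-restriction contraction $\Expect{J}{\norm{\nabla f_{\bar{J}\rightarrow x_{\bar J}}(x_J)}_p}\leq (s_f(x)/d)^{1/p}$, use $W_{=1}\leq 1$ to drop the exponent $\alpha$, invoke Fact~\ref{fact:restriction}, and sum dyadically. The one subtlety you flag --- that the linear dependence on $W_{\approx d}[f]$ comes from the trivial bound $W_{=1}^{\alpha}\geq W_{=1}$ rather than from Jensen --- is indeed the key point, and you handle it correctly.
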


Using the same argument as before and replacing invocations of Lemma~\ref{lemma:tal_lvld_above} with Lemma~\ref{lemma:tal_lvld_above_alpha}, one may conclude variants of Theorems~\ref{thm:tal_iso} and~\ref{thm:EG} in which square roots are replaced with any power $p\in [1/2,1]$.
Below are precise statements.
\begin{thm}
  There exists $c>0$ such that for all non-constant functions $f\colon\power{n}\to\power{}$ and for all $p\in [1/2, 1]$ we have that
  \[
  \Expect{x}{s_f(x)^p}\geq c\cdot {\sf var}(f)\left(\log\left(\frac{1}{{\sf var}(f)}\right)\right)^{p}.
  \]
\end{thm}
\begin{proof}
  The proof is identical to the proof of Theorem~\ref{thm:tal_iso} where we use Lemma~\ref{lemma:tal_lvld_above_alpha}
  instead of Lemma~\ref{lemma:tal_lvld_above}.
\end{proof}

\begin{thm}
  There exists $c>0$ such that for all non-constant functions $f\colon\power{n}\to\power{}$
  and for all $p\in [1/2, 1]$ we have that
  \[
  \Expect{x}{s_f(x)^p}\geq c\cdot {\sf var}(f)\left(\log\left(1+\frac{1}{\sum\limits_{i=1}^{n} I_i[f]^2}\right)\right)^p.
  \]
\end{thm}
\begin{proof}
  The proof is identical to the proof of Theorem~\ref{thm:EG} where we use Lemma~\ref{lemma:tal_lvld_above_alpha}
  instead of Lemma~\ref{lemma:tal_lvld_above}.
\end{proof}
\subsubsection{Talagrand Boundary and Noise Sensitivity}
We next demonstrate the connection between having small Talagrand-type boundary and being noise stable, a notion that we define next.

For a parameter $\rho\in [0,1]$ and a point $x\in \power{n}$, the distribution of $\rho$-correlated inputs with $x$, denoted as $y\sim_{\rho} x$,
is defined by the following randomized process. For each $i\in[n]$ independently, set $y_i = x_i$ with probability $\rho$, and otherwise sample $y_i\in\{0,1\}$ uniformly.
The operator $T_{\rho}\colon L_2(\power{n}) \to L_2(\power{n})$, known as the noise operator with noise rate
$1-\rho$, is defined as
\[
T_{\rho} f(x) = \Expect{y\sim_{\rho} x}{f(y)}.
\]
\begin{definition}
  For $\eps>0$, the noise stability of $f$ with noise rate $\eps$ is defined as ${\sf Stab}_{1-\eps}(f) = \inner{f}{T_{1-\eps} f}$.
\end{definition}

The relation established in Lemma~\ref{lemma:tal_lvld_above_alpha} between the Fourier weight of $f$ and $\Expect{}{s_f(x)^{\alpha}}$
for $1/2\leq\alpha\leq 1$ implies a relation between the latter quantity and noise stability, as follows:
\begin{corollary}\label{corr:noise_stable}
  There exists an absolute constant $C>0$, such that for any $\delta\in[0,1/2]$ and for any function $f\colon\power{n}\to\power{}$ satisfying
  $\Expect{}{s_f(x)^{\frac{1}{2}+\delta}}\leq A$, we have that ${\sf Stab}_{1-\eps}(f)\geq 1 - C\cdot A\cdot \eps^{\frac{1}{2} + \delta}\log(1/\eps)$.
\end{corollary}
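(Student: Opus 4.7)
The plan is to combine the Fourier expansion of the noise operator with the Fourier-tail bound already established in Lemma~\ref{lemma:tal_lvld_above_alpha}. Writing $\alpha=\half+\delta$ and passing (harmlessly, since $s_f$ is unchanged) from $\power{}$-valued to $\{-1,1\}$-valued $f$, Parseval together with the eigenvalue decomposition $T_{1-\eps}\chi_S=(1-\eps)^{|S|}\chi_S$ yields
\[
1-{\sf Stab}_{1-\eps}(f) \;=\; \sum_{k\geq 1}\bigl(1-(1-\eps)^k\bigr)W_{=k}[f],
\]
so the task reduces to bounding this sum by $\ll A\,\eps^{\alpha}$. Lemma~\ref{lemma:tal_lvld_above_alpha} supplies the quantitative input $W_{\geq d}[f]\ll A/d^{\alpha}$ for every $d\in\mathbb{N}$.

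The main step is a two-regime split at the scale $d=\lceil 1/\eps\rceil$. For the tail $k>d$, use the trivial $1-(1-\eps)^k\leq 1$ to reduce the contribution to $W_{>d}[f]$, which is immediately $\ll A\,\eps^{\alpha}$ by Lemma~\ref{lemma:tal_lvld_above_alpha}. For the body $k\leq d$, use Bernoulli's inequality $1-(1-\eps)^k\leq k\eps$ and swap the order of summation:
\[
\eps\sum_{k=1}^{d}k\,W_{=k}[f] \;=\; \eps\sum_{j=1}^{d}\sum_{k=j}^{d}W_{=k}[f] \;\leq\; \eps\sum_{j=1}^{d}W_{\geq j}[f] \;\ll\; \eps\sum_{j=1}^{d}\min\!\bigl(1,\,A/j^{\alpha}\bigr),
\]
where the trivial $W_{\geq j}[f]\leq 1$ is combined with Lemma~\ref{lemma:tal_lvld_above_alpha}. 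Splitting this final sum at the crossover $j^{\star}$ near $A^{1/\alpha}$ and comparing the resulting pieces with $\int t^{-\alpha}\,dt$ bounds each by $O(A\,\eps^{\alpha})$, as desired.

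The main nuisance I expect is the usual $\frac{1}{1-\alpha}$ factor arising from the $p$-series $\sum_{j\leq d}j^{-\alpha}$ when $\alpha<1$, which degenerates at the endpoint $\alpha=1$ (i.e.\ $\delta=\half$). However, at precisely that endpoint the global linearization $1-(1-\eps)^k\leq k\eps$ applied to the entire sum gives the sharp bound $1-{\sf Stab}_{1-\eps}(f)\leq\eps\,\E[s_f]\leq\eps\,A$ without any split. A two-case analysis --- the low/high-frequency split for $\alpha$ bounded away from $1$, and the direct linearization near $\alpha=1$ --- therefore produces the claimed bound with an absolute constant $C$ uniform over $\delta\in[0,\half]$.
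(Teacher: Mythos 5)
Your proposal is correct and follows essentially the same route as the paper's proof: the Fourier expansion of $1-{\sf Stab}_{1-\eps}(f)$, a split at $d=1/\eps$, Lemma~\ref{lemma:tal_lvld_above_alpha} for the tail, and Bernoulli's inequality plus summation by parts for the body. The endpoint subtlety you raise (the $\frac{1}{1-\alpha}$ blow-up of the $p$-series as $\delta\to 1/2$) is real and is in fact glossed over in the paper's own proof; just note that your proposed patch via direct linearization needs $\E[s_f(x)]\leq A$, which follows from the hypothesis only at $\delta=1/2$ exactly, not merely near it.
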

\begin{proof}
  \[
  1-{\sf Stab}_{1-\eps}(f) = \Prob{x,y\text{ $(1-\eps)$-correlated}}{f(x)\neq f(y)}
  =2\inner{1-f}{T_{1-\eps} f}
  =2\sum\limits_{k=0}^n{(1-(1-\eps)^{k})W_{=k}[f]}.
  \]

  Set $d = 1/\eps$; we split the sum into $k< d$ and $k \geq  d$, and upper bound the contribution of each part separately.
  For $k\geq d$, using Lemma~\ref{lemma:tal_lvld_above_alpha} we have
  \[
  2\sum\limits_{k=d}^n{(1-(1-\eps)^{k})W_{=k}[f]}
  \leq 2W_{\geq d}[f]\ll A\cdot d^{-(1/2+\delta)} = A\eps^{1/2+\delta}.
  \]
  For $k\leq d$, we write
  \[
  \sum\limits_{k=0}^d{(1-(1-\eps)^{k})W_{=k}[f]}
  \leq\sum\limits_{k=1}^d{k\eps W_{=k}[f]}
  \leq \eps \sum\limits_{k=1}^{d} W_{\geq k}[f].
  \]
  Using Lemma~\ref{lemma:tal_lvld_above_alpha} we get $W_{\geq k}[f]\ll A\cdot k^{-(1/2+\delta)}$, and so
  \[
  \sum\limits_{k=0}^d{(1-(1-\eps)^{k})W_{=k}[f]}
  \ll A\cdot \eps \sum\limits_{k=1}^{d} \frac{1}{k^{1/2+\delta}}
  \ll A \cdot \eps d^{1/2-\delta} \sum\limits_{k=1}^{d} \frac{1}{k}
  \ll A \cdot \eps d^{1/2-\delta} \log d.
  \qedhere
  \]
\end{proof}
\begin{remark}
  Inspecting the proof of Corollary~\ref{corr:noise_stable}, we see that if $\delta$ is bounded away from $1/2$, say
  $\delta\leq 0.49$, then the $\log(1/\eps)$ term may be removed. Indeed, this is true because in the end of the
  proof, we can replace the upper bound $\sum\limits_{k=1}^{d} \frac{1}{k^{1/2+\delta}}\ll d^{1/2-\delta}\log d$
  by the upper bound $\sum\limits_{k=1}^{d} \frac{1}{k^{1/2+\delta}}\ll d^{1/2-\delta}$ that holds if $\delta$ is
  bounded away from $1/2$.
\end{remark}

\subsection{Robust Versions}
Fix a $2$-coloring ${\sf col}\colon E(\{0,1\}^n)\to\{{\sf red}, {\sf blue}\}$.
Let $\nabla_{{\sf red}} f(x)$ be the Boolean vector whose $i$th coordinate is
$1$ if $f(x) = 1$ and $(x,x\oplus e_i)$ is a red sensitive edge, and let
$\nabla_{{\sf blue}} f(x)$ be the Boolean
vector whose $i$th coordinate is  $1$ if $f(x) = 0$ and $(x,x\oplus e_i)$ is a blue sensitive edge.
Analogously to Lemma~\ref{lemma:tal_lvl1} we have:
\begin{lemma}\label{lemma:tal_lvl1_robust}
  Let $f\colon\power{n}\to\power{}$, and ${\sf col}\colon E(\{0,1\}^n)\to\{{\sf red}, {\sf blue}\}$ be any
  coloring. Then
  \[
  \Expect{}{\norm{\nabla_{{\sf red}} f(x)}_2+\norm{\nabla_{{\sf blue}} f(x)}_2 }\geq \frac{1}{2}\sqrt{W_{=1}[f]}.
  \]
\end{lemma}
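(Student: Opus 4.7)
The plan is to mimic the proof of Lemma~\ref{lemma:tal_lvl1} after packing the two color-gradients into a single vector field. The key pointwise observation is that, for each $x$, one of $\nabla_{{\sf red}}f(x)$ and $\nabla_{{\sf blue}}f(x)$ vanishes (the red one when $f(x)=0$, the blue one when $f(x)=1$). Consequently, setting $V(x):=\nabla_{{\sf red}}f(x)+\nabla_{{\sf blue}}f(x)$, one has the identity
\[
\norm{\nabla_{{\sf red}}f(x)}_2+\norm{\nabla_{{\sf blue}}f(x)}_2=\norm{V(x)}_2,
\]
and hence, by the same triangle inequality used in the proof of Lemma~\ref{lemma:tal_lvl1}, the expectation on the left-hand side is at least $\norm{\Expect{x}{V(x)}}_2$.

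I would then compute $\Expect{x}{V(x)}$ coordinatewise. Unwinding the definitions, $V_i(x)=1$ iff the edge $(x,x\oplus e_i)$ is sensitive \emph{and} either ($f(x)=1$ and the edge is red) or ($f(x)=0$ and the edge is blue). The point is that for every sensitive edge in direction $i$, exactly one of its two endpoints satisfies this condition regardless of which color ${\sf col}$ assigned it: a red sensitive edge is caught at its $f=1$ endpoint, a blue sensitive edge at its $f=0$ endpoint. Therefore
\[
\Expect{x}{V_i(x)}=\tfrac{1}{2}I_i[f],
\]
a quantity that is \emph{independent} of the coloring.

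To conclude, I would combine this with the elementary bound $I_i[f]\geq 2\card{\widehat{f}(\set{i})}$, which follows from $\card{\Expect{x}{\partial_i f(x)}}\leq \Expect{x}{\card{\partial_i f(x)}}$ together with Fact~\ref{fact:singeton}. This yields
\[
\Expect{x}{\norm{\nabla_{{\sf red}}f(x)}_2+\norm{\nabla_{{\sf blue}}f(x)}_2}\geq \tfrac{1}{2}\sqrt{\sum_{i=1}^{n} I_i[f]^2}\geq \sqrt{W_{=1}[f]},
\]
which is in fact stronger than the stated $\tfrac{1}{2}\sqrt{W_{=1}[f]}$. The main conceptual step, rather than a technical obstacle, is spotting the color cancellation in the middle paragraph: the individual expectations $\Expect{x}{(\nabla_{{\sf red}}f(x))_i}$ and $\Expect{x}{(\nabla_{{\sf blue}}f(x))_i}$ separately depend on ${\sf col}$, but their sum does not, because every sensitive edge is captured at exactly one endpoint no matter which color it was given.
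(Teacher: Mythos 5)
Your proof is correct and matches the paper's argument in all essentials: both hinge on the observation that every sensitive edge in direction $i$ is captured at exactly one endpoint regardless of its color, giving $\tfrac{1}{2}I_i[f]$ coordinatewise, and then on $I_i[f]\geq 2\card{\widehat{f}(\set{i})}$. The only (cosmetic) difference is that you use the disjoint supports to apply the triangle inequality once to $V=\nabla_{{\sf red}}f+\nabla_{{\sf blue}}f$, whereas the paper applies it to each color separately and then uses $\norm{u}_2+\norm{v}_2\geq\norm{u+v}_2$; both yield the stronger bound $\sqrt{W_{=1}[f]}$.
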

\begin{proof}
By the triangle inequality
$\Expect{x}{\norm{\nabla_{{\sf red}} f(x)}_2}
  \geq
  \norm{\Expect{x}{\nabla_{{\sf red}} f}(x)}_2$,
  and looking at the vector
  $v = \Expect{x}{\nabla_{{\sf red}} f(x)}$ we see its $i$th entry is
  $v_i = \Prob{x}{f(x) = 1, (x,x\oplus e_i)\text{ is a red influential edge}}$.
  Similarly, $\Expect{x}{\norm{\nabla_{{\sf blue}} f(x)}_2}
  \geq
  \norm{\Expect{x}{\nabla_{{\sf blue}} f}(x)}_2$ and looking at the vector
  $u = \Expect{x}{\nabla_{{\sf blue}} f(x)}$ we see its $i$th entry is
  $u_i = \Prob{x}{f(x) = 0, (x,x\oplus e_i)\text{ is a blue influential edge}}$.
  It follows that
  \[
  \Expect{}{\norm{\nabla_{{\sf red}} f(x)}_2+\norm{\nabla_{{\sf blue}} f(x)}_2 }
  \geq \norm{v}_2+\norm{u}_2
  \geq \norm{u+v}_2.
  \]
  Let $R_{i}(x)$ be the indicator that $(x,x\oplus e_i)$ is a red influential edge and $f(x) = 1$,
  and $B_i(x)$ be the indicator that $(x,x\oplus e_i)$ is a blue influential edge and $f(x) = 0$.
  Then
  \[
  u_i + v_i
  =\frac{1}{2^{n+1}}\sum\limits_{x}(R_i(x) + R_i(x\oplus e_i) + B_i(x) + B_i(x\oplus e_i))
  \geq
  \frac{1}{2^{n+1}}\sum\limits_{x} 1_{(x,x\oplus e_i)\text{ influential}}
  = \frac{1}{2}I_i[f].
  \]
  Hence
  \[
  \norm{u+v}_2
  \geq \frac{1}{2}\sqrt{\sum\limits_{i=1}^{n} I_i[f]^2}
  \geq \frac{1}{2} \sqrt{W_{=1}[f]}.
  \qedhere
  \]
  Secondly, for any $f$, coloring ${\sf col}$ and $x\in\power{n}$, choosing $J\subseteq [n]$ randomly by including each coordinate independently
  with probability $\frac{1}{2d}$,
  one has
  \[
  \Expect{J}{\sqrt{s_{f_{\bar{J}\rightarrow x_{\bar{J}}},{\sf red}}(x_{J})}}
  \leq
  \sqrt{\Expect{J}{s_{f_{\bar{J}\rightarrow x_{\bar{J}}},{\sf red}}(x_{J})}}
  =\sqrt{\frac{s_{f,{\sf red}}(x)}{2d}},
  \]
  and similarly for blue sensitivity. This implies the analog of~\eqref{eq1} for red and blue sensitivity, and repeating
  the argument in Lemmas~\ref{lemma:tal_lvld_exact},~\ref{lemma:tal_lvld_above} we get that
  ${\sf Talagrand}_{{\sf col}}(f)\gg \sqrt{d} W_{\geq d}[f]$ for any coloring ${\sf col}$. The robust versions of
  Theorems~\ref{thm:tal_iso} and~\ref{thm:EG} now follow as we have shown that
  $W_{\geq d}[f]\geq \Omega({\sf var}(f))$ where we can either take $d=\Omega(\log(1/{\sf var}(f)))$
  or $d=\Omega(\log(1/M(f)))$.
\end{proof}

\section{Improved Junta Theorems}\label{sec:super_fried}
\subsection{Proof of Theorem~\ref{thm:super_fried}}
In this section, we prove Theorem~\ref{thm:super_fried}. As explained in the introduction, if one does not care about getting tight dependency
between the junta size $J$ and the parameter $A$, then one may use Corollary~\ref{corr:noise_stable} and Bourgain's noise sensitivity theorem~\cite{Bourgain,kindler2018gaussian}
and get a junta theorem with slightly worse parameters than those stated in Theorem~\ref{thm:super_fried}. The main goal of this section
is to get an optimal dependency of $J$ on $A$.

For the proof, we will need the notion of noisy influences of a function $f$, and we first generalize the notion of influences to real valued functions.
\begin{definition}
  Let $f\colon\power{n}\to\mathbb{R}$ be a function, and let $i\in [n]$ be a coordinate.
  We define the influence of $i$ on $f$ as $I_i[f] = \norm{\partial_i f}_2^2$, where
  we recall that $\partial_i f$ is the discrete derivative of $f$ along direction $i$.
\end{definition}
\begin{definition}
  The $\rho$-noisy influence of a coordinate $i$ for $f\colon\{0,1\}^n\to\mathbb{R}$ is defined to be
  $I_i^{(\rho)}[f] = I_i[T_{\rho} f]$.
\end{definition}

\begin{fact}\label{fact:sum_of_noisy}[\cite{ODonnell}]
  For all $f\colon\{0,1\}^n\to\{0,1\}$ we have that $\sum\limits_{i=1}^{n} I_i^{(\rho)}[f] \ll \frac{1}{1-\rho}$.
  \qed
\end{fact}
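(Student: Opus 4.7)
The plan is to prove this fact by a direct Fourier-analytic calculation, expressing the noisy influence in terms of the Fourier coefficients of $f$ and then interchanging the order of summation.

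First I would expand the noisy influence via Fourier analysis. Recall that for any $g\colon\power{n}\to\mathbb{R}$, the (standard) influence of coordinate $i$ satisfies $I_i[g] = \sum_{S\ni i}\widehat{g}(S)^2$ (this is immediate from Fact~\ref{fact:singeton}, since $\partial_i g$ has Fourier expansion supported on sets containing $i$). Combining this with the identity $\widehat{T_{\rho}f}(S) = \rho^{|S|}\widehat{f}(S)$, which follows directly from the definition of $T_{\rho}$ as the averaging operator with respect to $\rho$-correlated inputs, we obtain
\[
I_i^{(\rho)}[f] \;=\; I_i[T_{\rho}f] \;=\; \sum_{S\ni i}\rho^{2|S|}\widehat{f}(S)^2.
\]
Summing over $i\in[n]$ and swapping the order of summation gives
\[
\sum_{i=1}^{n} I_i^{(\rho)}[f] \;=\; \sum_{S\subseteq[n]} |S|\,\rho^{2|S|}\,\widehat{f}(S)^2.
\]

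Next I would bound the coefficient $|S|\rho^{2|S|}$ uniformly in $|S|$. A quick calculus exercise shows that the maximum of $k\rho^{2k}$ over $k\geq 0$ is achieved at $k = -1/(2\log\rho)$ and equals $-1/(2e\log\rho)$; since $-\log\rho \geq 1-\rho$ for $\rho\in(0,1)$, this max is $O(1/(1-\rho))$. Applying this bound termwise and then using Parseval together with the fact that $f$ is $\set{0,1}$-valued (so $\sum_S\widehat{f}(S)^2 = \norm{f}_2^2 \leq 1$) yields
\[
\sum_{i=1}^{n} I_i^{(\rho)}[f] \;\leq\; \max_{k\geq 0}\big(k\rho^{2k}\big)\cdot \sum_{S}\widehat{f}(S)^2 \;=\; O\!\left(\frac{1}{1-\rho}\right),
\]
which is exactly the claim.

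There is no real obstacle here: the statement is a clean Fourier identity combined with a one-variable optimization, and the only mild subtlety is the elementary estimate $-\log\rho \geq 1-\rho$ needed to convert the $1/\log(1/\rho)$ bound into the advertised $1/(1-\rho)$ bound. Since this fact is already stated as folklore attributed to \cite{ODonnell}, I would expect the author's proof (if given at all) to be a one-line reference to exactly this computation.
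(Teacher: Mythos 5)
Your proof is correct and is exactly the standard computation behind the fact the paper leaves unproved (it only cites O'Donnell's book): write $I_i^{(\rho)}[f]=\sum_{S\ni i}\rho^{2|S|}\widehat{f}(S)^2$ up to normalization, sum over $i$ to get $\sum_S |S|\rho^{2|S|}\widehat{f}(S)^2$, and bound $k\rho^{2k}=O(1/(1-\rho))$ uniformly. The only quibble is a constant: with the paper's conventions ($\{0,1\}$-valued $f$ and $\partial_i$ a full difference, so $I_i[g]=4\sum_{S\ni i}\widehat{g}(S)^2$) your identity is off by a factor of $4$, which is harmlessly absorbed into the $O(\cdot)$.
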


Let $C_1, C_2$ be two absolute constants, and we think of $C_2$ as much larger than $C_1$. Given $f\colon\{0,1\}^n\to\{0,1\}$ with
$\Expect{x}{s_f(x)^p}\leq A$, we denote $d = C_1\left(\frac{A}{\eps}\right)^{1/p}$, $\delta = 2^{-\frac{C_2}{2p-1} d}$.
Define the candidate set of junta coordinates as
\[
\mathcal{J} = \sett{i\in [n]}{I_i[T_{1-\frac{1}{3d}}[f]]\geq \delta},
\]
i.e.~$\mathcal{J}$ is the set of coordinates whose noisy influence at $f$ is somewhat large. Our task is to show that $f$ is $\eps$-close to a $\mathcal{J}$ junta,
i.e.~that
\begin{equation}\label{eq6}
\sum\limits_{S: S\not\subseteq \mathcal{J}}\widehat{f}(S)^2\leq \eps,
\end{equation}
and the rest of the proof is devoted towards this goal. We remark that the junta $\mathcal{J}$ is
similar in spirit to the junta in Bourgain’s original proof~\cite{Bourgain}, which is easier to see in~\cite[Section 4]{kindler2018gaussian} and
in particular in~\cite[Theorem 4.20]{kindler2018gaussian}. Indeed, both here and in~\cite[Section 4]{kindler2018gaussian}, the junta is
taken to be the set of coordinates with significant noisy influence. There are many other similarities between the argument
presented therein and the argument here. First, in both cases one considers a dyadic sort of partitioning of the Fourier coefficients:
in~\cite[Section 4]{kindler2018gaussian} it is achieved via considering the difference in the noise sensitivity of $f$ at different
noise rates (see~\cite[Theorem 4.16]{kindler2018gaussian}). In the current proof, we perform explicit dyadic partitioning and also
accompany it by applications of suitable noise operators (the operators $S_k$, $S_k'$ and $S_k''$ below). Second, both proofs also use some relation between the noise sensitivity / moments
of the sensitivity and the level $1$ weight of the function $f$ (and its restrictions). While the technical execution of the proofs is
different, it is hard to pinpoint the high level difference between them.
For example, the ``harsh'' degree truncations
in~\cite{kindler2018gaussian} have to be replaced by the ``softer'' type of degree truncations that are achieved by the operators $S_k$, $S_k'$ and
$S_k''$ (see also the parameter $\tilde{W}_{k,d}$) to make the argument go through. Additionally, towards the end of the argument we are able to bound the sum of noisy influences in a non-trivial way, which ultimately allows us to choose $\delta = 2^{O_p(d)}$ as opposed to
$\delta = 2^{O_{p}(d\log d)}$ (which would have resulted in slightly worse bounds as in~\cite[Theorem 4.20]{kindler2018gaussian}); see Claim~\ref{claim:bound_low_deg_inf}.

Towards proving~\eqref{eq6}, we partition the contribution of different scales of
$\card{S\cap \mathcal{J}}$ to the left hand side, and for each $k=1,\ldots,\log d$ define
\[
W_{k,d}[f] = \sum\limits_{\substack{S: \card{S}\leq d,\\ 2^{k-1}\leq \card{S\cap \bar{\mathcal{J}}}\leq 2^k}}\widehat{f}(S)^2.
\]
Thus, we may write
\begin{equation}\label{eq:super_frid1}
  \sum\limits_{S: S\not\subseteq \mathcal{J}}\widehat{f}(S)^2
  \leq
  W_{>d}[f]
  +
  \sum\limits_{k=1}^{\log d} W_{k,d}[f].
\end{equation}
The rest of the proof is devoted to bounding $W_{>d}[f]$ and $W_{k,d}[f]$ for each $k$.
From Lemma~\ref{lemma:tal_lvld_above_alpha} it follows that
\begin{equation}\label{eq:super_frid}
  W_{>d}[f]\leq O\left(\frac{A}{d^p}\right) \leq \frac{\eps}{16}
\end{equation}
for sufficiently large $C_1$. Next, we bound $W_{k,d}[f]$. Fix $k$, and consider the operator $S_k = T_{1-1/d}^{\mathcal{J}}T_{1-1/2^{k}}^{\bar{\mathcal{J}}}$,
i.e. the noise operator that applies $1/2^k$ noise on coordinates outside $\mathcal{J}$, and $1/d$ noise on coordinates of $\mathcal{J}$. Note that
\[
W_{k,d}[f] \ll W_{k,d}[S_k f],
\]
so it suffices to bound $W_{k,d}[S_k f]$. We partition $W_{k,d}[S_k f]$ according to contribution of each $j\in\bar{\mathcal{J}}$, getting:
\begin{align*}
W_{k,d}[S_k f]
=\sum\limits_{\substack{S: \card{S}\leq d,\\ 2^{k-1}\leq \card{S\cap \bar{\mathcal{J}}}\leq 2^k}} \widehat{S_k f}(S)^2
\leq \frac{1}{2^{k-1}} \sum\limits_{j\not\in\mathcal{J}}\sum\limits_{\substack{S\ni j: \card{S}\leq d,\\ 2^{k-1}\leq \card{S\cap \bar{\mathcal{J}}}\leq 2^k}} \widehat{S_k f}(S)^2
&\ll \frac{1}{2^{k-1}} \sum\limits_{j\not\in\mathcal{J}}\norm{S_k \partial_j f}_2^2\\
&\defeq \tilde{W}_{k,d}[f].
\end{align*}
Thus, we have that $W_{k,d}[f]\ll \tilde{W}_{k,d}[f]$, and the rest of the proof is devoted to upper bounding the right hand side.
Define the operators $S_{k}'$ and $S_k''$ as
\[
S_k' = T_{1-\frac{1}{3d}}^{\mathcal{J}}T_{1-\frac{1}{2^{k}}+\frac{2}{3d}}^{\bar{\mathcal{J}}},
\qquad
S_k'' = T_{1-\frac{2}{3d}}^{\mathcal{J}}T_{1-\frac{1}{2^{k}}+\frac{1}{3d}}^{\bar{\mathcal{J}}}.
\]

We establish the following claim using random restrictions and hypercontractivity:
\begin{claim}\label{claim:super_fried_main}
  For all $\tau\in (0,1)$, we have that
  \[
  2^k \tilde{W}_{k,d}[f]\leq
  C\cdot d^{1-p}\tau^{2p-1} \Expect{x}{s_f(x)^p} + C\cdot\tau^{-\frac{1}{100 d}}\sum\limits_{j\in\bar{\mathcal{J}}} \norm{S_k' \partial_j f}_2^{2+\frac{1}{100d}},
  \]
  where $C$ is an absolute constant.
\end{claim}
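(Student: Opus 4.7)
The plan is to split the sum $\sum_{j \in \bar{\mathcal{J}}} \|S_k \partial_j f\|_2^2$ based on whether $\|S_k' \partial_j f\|_2 \geq \tau$ (call this set $L$) or not (call this $\mathcal{S}$), and handle the two sub-sums by different arguments. The contributions of $L$ and $\mathcal{S}$ should produce, respectively, the second and first terms of the bound.

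For $j \in L$, I would use that $S_k$ is uniformly more contractive than $S_k'$: on $\mathcal{J}$ the noise rate rises from $1/(2d)$ to $1/d$, and on $\bar{\mathcal{J}}$ from $1/2^{k}-1/(2d)$ to $1/2^{k}$, so that $S_k = T_{\rho_1}^{\mathcal{J}} T_{\rho_2}^{\bar{\mathcal{J}}} S_k'$ with $\rho_1,\rho_2 \leq 1$, whence $\|S_k \partial_j f\|_2 \leq \|S_k' \partial_j f\|_2$. Setting $\eta = 1/(100d)$, the defining inequality $\|S_k' \partial_j f\|_2 \geq \tau$ gives
\[
\|S_k \partial_j f\|_2^2 \;\leq\; \|S_k' \partial_j f\|_2^2 \;=\; \|S_k' \partial_j f\|_2^{2+\eta}\,\|S_k' \partial_j f\|_2^{-\eta} \;\leq\; \tau^{-\eta}\,\|S_k' \partial_j f\|_2^{2+\eta},
\]
and summing over $L \subseteq \bar{\mathcal{J}}$ produces the second term of the Claim.

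For $j \in \mathcal{S}$, the target is $\sum_{j \in \mathcal{S}} \|S_k \partial_j f\|_2^2 \lesssim d^{-p}\tau^{2p-1}\E[s_f^p]$. Setting $G(x) = \|S_k \nabla_{\mathcal{S}} f(x)\|_2$, so that this target equals $\E[G^2]$, two applications of Jensen provide a clean input: first, $\|\E V\|_2 \leq \E\|V\|_2$ applied to $V(y) = \nabla_{\mathcal{S}} f(y)$ gives $G(x) \leq S_k\bigl[\sqrt{s_{f,\mathcal{S}}}\bigr](x)$, and then Jensen for the convex map $t \mapsto t^{2p}$ (valid since $2p \geq 1$) gives $G(x)^{2p} \leq S_k\bigl[s_{f,\mathcal{S}}^{p}\bigr](x)$. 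As $S_k$ preserves the uniform distribution, taking expectations yields
\[
\E[G^{2p}] \;\leq\; \E[s_{f,\mathcal{S}}^{p}] \;\leq\; \E[s_f^p] \;=\; A.
\]
I would then upgrade this $L^{2p}$-bound to the desired $L^2$-bound by Hölder interpolation, the crucial extra input being a hypercontractive estimate that exploits the noise separating $S_k'$ from $S_k$ having rate at least $1/(2d)$ per coordinate (which is precisely the origin of the exponent $1/(100d)$ in the statement), combined with Lemma~\ref{lemma:tal_lvld_above_alpha} applied at level $d$ (giving $W_{\geq d}[f] \lesssim A/d^{p}$) to produce the factor $d^{-p}$.

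The hard part is exactly this small-case interpolation. The naive pointwise inequality $\|S_k \partial_j f\|_2 \leq \tau$ for $j \in \mathcal{S}$ only yields $\sum_{j \in \mathcal{S}} \|S_k \partial_j f\|_2^2 \leq |\mathcal{S}|\tau^2$, which is useless because $|\mathcal{S}|$ is not controlled by any parameter in the Claim. To beat this one must leverage the hypercontractive separation between $S_k$ and $S_k'$ into a pointwise or distributional statement that genuinely couples with the $p$-moment hypothesis; I expect the right device to be a random restriction of $\bar{\mathcal{J}}$ at rate $\Theta(1/d)$, and matching the precise exponent $2p-1$ on $\tau$ (rather than the easier $2p$) will be the subtlest quantitative ingredient.
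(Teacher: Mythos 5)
Your treatment of the large coordinates is correct and in fact simpler than the paper's: since $S_k = T_{\rho_1}^{\mathcal J}T_{\rho_2}^{\bar{\mathcal J}}S_k'$ with $\rho_1,\rho_2\leq 1$, the bound $\norm{S_k\partial_jf}_2^2\leq\tau^{-1/100d}\norm{S_k'\partial_jf}_2^{2+1/100d}$ on the set where $\norm{S_k'\partial_jf}_2\geq\tau$ needs no hypercontractivity. But there is a genuine gap exactly where you flag it: the contribution of the small coordinates, which is the entire content of the first term of the claim, is not proved. The moment bound $\E[G^{2p}]\leq\E[s_f^p]$ you derive is true but cannot be upgraded to $\E[G^2]\ll d^{-p}\tau^{2p-1}\E[s_f^p]$ by H\"older interpolation: that route would require a pointwise bound $G\leq M$ with $M^{2-2p}\leq d^{-p}\tau^{2p-1}$, and your split only controls the $L^2$ norms $\norm{S_k'\partial_jf}_2\leq\tau$ coordinate by coordinate, which gives no useful pointwise control on $G$ and, as you yourself note, yields only the vacuous bound $\card{\mathcal S}\tau^2$.

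The paper's mechanism is different in a way that matters. The threshold at $\tau$ is applied not to the global quantities $\norm{S_k'\partial_jf}_2$ but to the \emph{restricted singleton Fourier coefficients} $\widehat{g_{I\to z}}(\set{i})$ of $g=S_kf$, after a random restriction in which each coordinate stays alive with probability $1/(2d)$; the identity $d\,\Expect{I,z}{\sum_{i\in\bar I\cap\bar{\mathcal J}}\widehat{g_{I\to z}}(\set{i})^2}\gg\sum_{j\notin\mathcal J}\norm{S_k\partial_jf}_2^2$ converts the target into a statement about restrictions. For restricted coefficients below $\tau$ one uses the elementary inequality $\sum_{i}a_i^2\leq\tau^{2p-1}\bigl(\sum_i a_i^{1/p}\bigr)^p$ (valid when each $a_i\leq\tau$) to pass to the $\ell_{1/p}$-norm of the singleton vector, which by the triangle inequality (the $\ell_{1/p}$ analogue of Lemma~\ref{lemma:tal_lvl1}) is at most $\Expect{x}{\norm{\nabla g_{I\to z}(x)}_{1/p}}$; Jensen over the random restriction then yields the $d^{-p}$ gain, and Claim~\ref{claim:noise_decreases_tal} removes the noise operator, leaving $\tau^{2p-1}d^{-p}\Expect{x}{s_f(x)^p}$. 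The coefficients above $\tau$ are the ones that need hypercontractivity (Lemma~\ref{lemma:hypercontractivity_noise} applied to the $T_{1-1/2d}$ factor separating $S_k$ from $S_k'$), producing the second term. The point is that the $\tau$-split must live at the level of restrictions for the $\ell_2\to\ell_{1/p}$ step to be available; your global split cannot be repaired without essentially reintroducing this device, so the low part of your argument is missing its main idea rather than merely its details.
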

\begin{proof}
  Deferred to Section~\ref{sec:missing_proofs}.
\end{proof}

Given Claim~\ref{claim:super_fried_main}, one may pull out $\norm{S_k' \partial_j f}_2^{\frac{1}{100 d}}\leq \delta^{\frac{1}{200d}}$ outside
the sum, and bound the sum on the rest of the noisy influences by $O(d)$ to get a bound of the form
$2^k \tilde{W}_{k,d}[f]\ll d^{1-p}\tau^{2p-1} \Expect{x}{s_f(x)^p} + d\tau^{-\frac{1}{100 d}}\delta^{\frac{1}{200d}}$.
This idea is good enough to establish a junta theorem, yet it gives worse parameters.\footnote{One has to take
$\delta = 2^{-O(d\log d)}$, as opposed to $\delta = 2^{-O(d)}$ as we have taken. We remark that if one only cares
about getting a larger junta of size $2^{O(d\log d)}$, the proof greatly simplifies.} Instead, to get the tight result with respect
to the junta size, we will need a more careful bound on the sum of noisy influences outside $\mathcal{J}$. Specifically,
we use an improved bound of the sum of noisy influences that relates them to $W_{\ell,d}[f]$.
To state this bound we define
\[
\eps_{\ell,d} = \sum\limits_{\substack{\card{S}>d\\ 2^{\ell-1}\leq \card{S\cap \bar{\mathcal{J}}}\leq 2^{\ell}}} \widehat{f}(S)^2.
\]
 \begin{claim}\label{claim:bound_low_deg_inf}
    $\frac{1}{2^k}\sum\limits_{j\in\bar{\mathcal{J}}} \norm{S_k' \partial_j f}_2^{2}\ll \sum\limits_{\ell=1}^{\log n} 2^{-\card{\ell-k}}(W_{\ell,d}[f]+\eps_{\ell, d})$.
  \end{claim}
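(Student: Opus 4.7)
The plan is to expand the left-hand side via Parseval and then bound the arising numerical coefficients by partitioning the Fourier sum into dyadic scales of $|S\cap\bar{\mathcal{J}}|$. First, by Fact~\ref{fact:singeton} the derivative $\partial_j f$ has Fourier expansion $2\sum_{S\ni j}\widehat{f}(S)\chi_{S\setminus\{j\}}$, and the operator $S_k'$ is diagonal in the Fourier basis with eigenvalue $(1-\tfrac{1}{2d})^{|T\cap\mathcal{J}|}(1-\tfrac{1}{2^k}+\tfrac{1}{2d})^{|T\cap\bar{\mathcal{J}}|}$ on $\chi_T$. Applying Parseval to $\norm{S_k'\partial_j f}_2^2$ and summing over $j\in\bar{\mathcal{J}}$ while swapping the order of summation, I would obtain
\[
\sum_{j\in\bar{\mathcal{J}}}\norm{S_k'\partial_j f}_2^2 \;=\; 4\sum_{S}\widehat{f}(S)^2\cdot|S\cap\bar{\mathcal{J}}|\cdot\Bigl(1-\tfrac{1}{2d}\Bigr)^{2|S\cap\mathcal{J}|}\Bigl(1-\tfrac{1}{2^k}+\tfrac{1}{2d}\Bigr)^{2(|S\cap\bar{\mathcal{J}}|-1)}.
\]

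Next I would bound the numerical coefficient sitting in front of each $\widehat{f}(S)^2$. The first factor is at most $1$. Since $k\leq\log d$, a direct check shows $1-\tfrac{1}{2^k}+\tfrac{1}{2d}\leq 1-\tfrac{1}{2^{k+1}}$, so writing $m=|S\cap\bar{\mathcal{J}}|$ the whole coefficient, multiplied by $2^{-k}$, is bounded (up to an absolute constant) by $2^{-k}\,m\,e^{-m/2^k}$. I then partition the sum over $S$ by dyadic scales $m\in[2^{\ell-1},2^\ell]$ for $\ell=1,\dots,\log n$ and claim that this prefactor is $O(2^{-|\ell-k|})$ within each scale: when $\ell\leq k$, this uses $2^{-k}m\leq 2^{\ell-k}$; when $\ell>k$, setting $x=m/2^k\in[2^{\ell-1-k},2^{\ell-k}]$, the expression becomes $xe^{-x}$, and the double-exponential decay of $e^{-x}$ ensures that $2^{2(\ell-k)}e^{-2^{\ell-1-k}}$ stays uniformly bounded in $\ell-k\geq 1$, which is exactly what is needed.

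Finally, within each dyadic scale the remaining Fourier mass $\sum_{S:\,2^{\ell-1}\leq|S\cap\bar{\mathcal{J}}|\leq 2^\ell}\widehat{f}(S)^2$ splits into $W_{\ell,d}[f]+\eps_{\ell,d}$ according to whether $|S|\leq d$ or $|S|>d$. Summing the scale-wise contributions over $\ell$ yields the desired inequality. I do not anticipate a real obstacle: the argument is a clean Parseval computation followed by an elementary estimate on $xe^{-x}$, and the hypothesis $k\leq\log d$ is used only to ensure that the additive $\tfrac{1}{2d}$-noise on $\mathcal{J}$-coordinates in $S_k'$ does not degrade the $\tfrac{1}{2^k}$-noise on $\bar{\mathcal{J}}$-coordinates.
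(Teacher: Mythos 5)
Your proposal is correct and follows essentially the same route as the paper: expand via Parseval, group the Fourier mass by dyadic scales of $\card{S\cap\bar{\mathcal{J}}}$, and bound the resulting coefficient $2^{-k}\,m\,(1-\tfrac{1}{2^k}+\tfrac{1}{2d})^{\Theta(m)}$ by $O(2^{-\card{\ell-k}})$ on each scale. You are in fact slightly more careful than the paper about the squared noise eigenvalues and about justifying $1-\tfrac{1}{2^k}+\tfrac{1}{2d}\leq 1-\tfrac{1}{2^{k+1}}$ from $k\leq\log d$, but the argument is the same.
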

  \begin{proof}
    By definition,
    \[
    \frac{1}{2^k}\sum\limits_{j\in\bar{\mathcal{J}}} \norm{S_k' \partial_j f}_2^{2}
    =4\sum\limits_{S} \frac{\card{S\cap \bar{\mathcal{J}}}}{2^k} \left(1-\frac{1}{2^k}+\frac{2}{3d}\right)^{\card{S\cap \bar{\mathcal{J}}}-1}\left(1-\frac{1}{3d}\right)^{\card{S\cap \mathcal{J}}}\widehat{f}(S)^2.
    \]
    Partitioning the last sum according to $\card{S\cap \bar{\mathcal{J}}}$ and dyadically partitioning so that $2^{\ell-1}\leq \card{S\cap \bar{\mathcal{J}}} < 2^{\ell}$, we have that the last
    expression is at most
    \[
    \ll
    \sum\limits_{\ell=1}^{\log n} 2^{\ell-k} \left(1-\frac{1}{2^k}+\frac{2}{3d}\right)^{2^{\ell-1}}(W_{\ell,d}[f]+\eps_{\ell, d})
    \ll
    \sum\limits_{\ell=1}^{\log n} 2^{-\card{\ell-k}}(W_{\ell,d}[f]+\eps_{\ell, d}).\qedhere
    \]
  \end{proof}

    Combining Claims~\ref{claim:super_fried_main} and~\ref{claim:bound_low_deg_inf} we immediately get the following corollary:
\begin{corollary}\label{corr:bound_low_deg_inf}
  There exists an absolute constant $C>0$ such that for all $\tau>0$,
    \[
    \tilde{W}_{k,d}[f]\leq C\cdot 2^{-k} d^{1-p}\tau^{2p-1} \Expect{x}{s_f(x)^p}
    +C\cdot \tau^{-\frac{1}{100 d}}\delta^{\frac{1}{200 d}}
    \left(\sum\limits_{\ell=1}^{\log d} 2^{-\card{\ell-k}}W_{\ell,d}[f]+\sum\limits_{\ell=1}^{\log n} 2^{-\card{\ell-k}}\eps_{\ell, d}\right).
    \]
\end{corollary}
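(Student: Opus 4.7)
The plan is to combine Claims~\ref{claim:super_fried_main} and~\ref{claim:bound_low_deg_inf} directly. The one non-trivial step is to convert the $(2+\frac{1}{100d})$-moment sum appearing in the former into the plain $\ell_2^2$ sum controlled by the latter, which I will do by pulling out the factor $\norm{S_k'\partial_j f}_2^{\frac{1}{100d}}$ pointwise using the defining property of $\bar{\mathcal{J}}$.

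Concretely, I first argue that for every $j\in\bar{\mathcal{J}}$ one has $\norm{S_k'\partial_j f}_2^2 \leq O(\delta)$. The definition of $\mathcal{J}$ gives $I_j[T_{1-1/2d}f]<\delta$, which by Fact~\ref{fact:singeton} unpacks to $\norm{T_{1-1/2d}\partial_j f}_2^2 = O(\delta)$. Next, compare the two noise operators: $S_k' = T_{1-1/2d}^{\mathcal{J}} T_{1-1/2^k+1/2d}^{\bar{\mathcal{J}}}$ agrees with $T_{1-1/2d}$ on coordinates of $\mathcal{J}$, while on coordinates of $\bar{\mathcal{J}}$ it applies noise rate $1-1/2^k+1/2d$, which is at most $1-1/2d$ precisely when $2^k\leq d$; this holds throughout the relevant range $k\leq \log d$, outside of which $W_{k,d}[f]$ vanishes and there is nothing to prove. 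Hence $S_k'$ damps every Fourier coefficient at least as strongly as $T_{1-1/2d}$, giving $\norm{S_k'\partial_j f}_2^2 \leq O(\delta)$. Raising to the power $\frac{1}{200d}$ yields the pointwise estimate $\norm{S_k'\partial_j f}_2^{\frac{1}{100d}} \leq O(\delta^{\frac{1}{200d}})$.

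With this pointwise bound in hand the remainder is mechanical. Pulling the factor out of the sum,
\[
\sum_{j\in\bar{\mathcal{J}}}\norm{S_k'\partial_j f}_2^{2+\frac{1}{100d}} \ll \delta^{\frac{1}{200d}} \sum_{j\in\bar{\mathcal{J}}} \norm{S_k'\partial_j f}_2^{2},
\]
and Claim~\ref{claim:bound_low_deg_inf} bounds the right-hand sum by $O\!\left(2^k \sum_{\ell=1}^{\log n} 2^{-\card{\ell-k}}(W_{\ell,d}[f]+\eps_{\ell,d})\right)$. Plugging this into Claim~\ref{claim:super_fried_main} and dividing by $2^k$ yields the claimed inequality, with the sum involving $W_{\ell,d}[f]$ truncated at $\log d$ because $W_{\ell,d}[f]=0$ for $\ell>\log d$. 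The main (and essentially only) obstacle is the noise-rate comparison above; once one verifies that $S_k'$ is uniformly at least as noisy as $T_{1-1/2d}$ on every coordinate in the regime $k\leq \log d$, the rest is bookkeeping.
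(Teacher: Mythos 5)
Your proof is correct and follows essentially the route the paper intends: it combines Claims~\ref{claim:super_fried_main} and~\ref{claim:bound_low_deg_inf} by pulling out the factor $\norm{S_k'\partial_j f}_2^{1/100d}\leq O(\delta^{1/200d})$ for $j\in\bar{\mathcal{J}}$, which is exactly the step the paper sketches just before Claim~\ref{claim:bound_low_deg_inf}. Your verification that $S_k'$ damps at least as strongly as $T_{1-1/2d}$ on every coordinate in the regime $2^k\leq d$ is the right way to justify that step, and the remaining bookkeeping matches the paper.
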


  We may now give an upper bound on $\sum\limits_{k=1}^{\log d} W_{k,d}[f]$.
  \begin{claim}\label{claim:bd_outside_junta}
  For sufficiently large absolute constant $C_1$ and sufficiently large $C_2$ in comparison to $C_1$, we have that
    $\sum\limits_{k=1}^{\log d} W_{k,d}[f]\leq \frac{\eps}{16}$.
  \end{claim}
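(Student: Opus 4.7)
The plan is to apply Corollary~\ref{corr:bound_low_deg_inf} to each $k=1,\dots,\log d$ with the free parameter $\tau$ chosen to be a small absolute constant (say $\tau=1/2$), sum the resulting inequalities over $k$, and absorb the quantity $S:=\sum_{k=1}^{\log d} W_{k,d}[f]$ that reappears on the right-hand side back into the left-hand side. With $\tau=1/2$ both $\tau^{2p-1}\leq 1$ and $\tau^{-1/(100 d)}\leq 2$ are bounded by absolute constants, so the Corollary yields
\[
W_{k,d}[f] \;\leq\; D\cdot 2^{-k} d^{-p}\Expect{x}{s_f(x)^p}
+ D\,\delta^{1/(200 d)}\Bigl(\sum_{\ell=1}^{\log d} 2^{-|\ell-k|}W_{\ell,d}[f]
+ \sum_{\ell=1}^{\log n} 2^{-|\ell-k|}\eps_{\ell,d}\Bigr)
\]
for an absolute constant $D$.

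Summing this inequality over $k=1,\dots,\log d$ and using the elementary estimates $\sum_k 2^{-k}\ll 1$ and $\sum_k 2^{-|\ell-k|}\ll 1$, together with the trivial bound $\sum_{\ell} \eps_{\ell,d}\ll W_{>d}[f]$ (each $S$ with $|S|>d$ contributes to at most two terms of the dyadic decomposition), one arrives at a self-referential inequality of the form
\[
S \;\leq\; D'\, d^{-p}\, \Expect{x}{s_f(x)^p} \;+\; D'\,\delta^{1/(200 d)}\bigl(S + W_{>d}[f]\bigr)
\]
for some absolute constant $D'$.

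Now substitute $\delta=2^{-C_2 d/(2p-1)}$, which gives $\delta^{1/(200d)}=2^{-C_2/(200(2p-1))}\leq 2^{-C_2/200}$ (using $2p-1\leq 1$). Taking $C_2$ large enough that $D'\,\delta^{1/(200d)}\leq 1/4$, the $S$-term on the right can be absorbed into the left, yielding $S\leq \tfrac{4D'}{3}\, d^{-p}\,\Expect{x}{s_f(x)^p} + \tfrac{1}{3}W_{>d}[f]$. Since $d=C_1(A/\eps)^{1/p}$ gives $d^{-p}\Expect{x}{s_f(x)^p}\leq \eps/C_1^p$, and since $W_{>d}[f]\leq \eps/16$ by~\eqref{eq:super_frid} (for $C_1$ large enough), taking $C_1$ sufficiently large makes each of the two terms at most $\eps/32$, which is the desired bound $S\leq \eps/16$.

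The main obstacle is the self-referential nature of the bound: because $S$ appears on both sides of the inequality produced by Corollary~\ref{corr:bound_low_deg_inf}, the argument only closes once $\delta^{1/(200d)}$ is small enough to overwhelm the absolute constant $D'$. This is precisely why one must take $C_2$ sufficiently large, and it is also the reason the junta size in Theorem~\ref{thm:super_fried} carries a factor of $1/(2p-1)$ in the exponent, which blows up as $p\downarrow 1/2$.
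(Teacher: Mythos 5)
Your proof is correct and follows essentially the same route as the paper: apply Corollary~\ref{corr:bound_low_deg_inf}, sum over $k$, bound the $\eps_{\ell,d}$ contribution via $\sum_\ell \eps_{\ell,d}\ll W_{>d}[f]\leq \eps/16$, and absorb the self-referential term by taking $C_2$ large. The only (immaterial) difference is that you fix $\tau$ to be an absolute constant and control the first term through $C_1$, whereas the paper sets $\tau=\delta^{1/4}$ so that $\tau^{2p-1}$ is also driven small by $C_2$.
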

  \begin{proof}
    Let $\tau>0$ to be determined. As $W_{k,d}[f]\ll \tilde{W}_{k,d}[f]$, by Corollary~\ref{corr:bound_low_deg_inf} we have
    \[
    \sum\limits_{k=1}^{\log d} W_{k,d}[f]
    \ll
    \sum\limits_{k=1}^{\log d}
    \left(2^{-k}d^{1-p}\tau^{2p-1} A
    +\tau^{-\frac{1}{100 d}}\delta^{\frac{1}{200 d}}
    \left(\sum\limits_{\ell=1}^{\log d} 2^{-\card{\ell-k}}W_{\ell,d}[f]+\sum\limits_{\ell=1}^{\log n} 2^{-\card{\ell-k}}\eps_{\ell, d}\right)
    \right).
    \]
    The first sum is clearly $\ll d^{1-p}\tau^{2p-1} A$. The second sum is at most
    \[
    \tau^{-\frac{1}{100 d}}\delta^{\frac{1}{200 d}}
    \sum\limits_{\ell=1}^{\log d}W_{\ell,d}[f] \sum\limits_{k} 2^{-\card{\ell-k}}
    \ll \tau^{-\frac{1}{100 d}}\delta^{\frac{1}{200 d}}\sum\limits_{\ell=1}^{\log d} W_{\ell,d}[f].
    \]
    Finally, the third sum is at most
    \[
    \tau^{-\frac{1}{100 d}}\delta^{\frac{1}{200 d}}
    \sum\limits_{\ell=1}^{\log n} \eps_{\ell, d}
    \sum\limits_{k=1}^{\log d} 2^{-\card{\ell-k}}
    \ll
    \tau^{-\frac{1}{100 d}}\delta^{\frac{1}{200 d}}
    \sum\limits_{\ell=1}^{\log n} \eps_{\ell, d}
    \ll
    \tau^{-\frac{1}{100 d}}\delta^{\frac{1}{200 d}}
    W_{>d}[f]
    \ll \tau^{-\frac{1}{100 d}}\delta^{\frac{1}{200 d}}\eps,
    \]
    where we used~\eqref{eq:super_frid}.
    We thus get that
    \[
    \sum\limits_{k=1}^{\log d} W_{k,d}[f]
    \leq
    C\cdot d^{1-p}\tau^{2p-1} A
    +C\cdot \tau^{-\frac{1}{100 d}}\delta^{\frac{1}{200 d}}\sum\limits_{k=1}^{\log d} W_{k,d}[f]
    +C\cdot \tau^{-\frac{1}{100 d}}\delta^{\frac{1}{200 d}}\eps
    \]
    for some absolute constant $C>0$. We choose $\tau = \delta^{1/4}$ and then $C_2$ large enough so that
    $C\tau^{-\frac{1}{100 d}}\delta^{\frac{1}{200 d}}\leq \frac{1}{2}$, to get that
    $\sum\limits_{k=1}^{\log d} W_{k,d}[f]\leq 2C\cdot d^{1-p}\delta^{(2p-1)/4} A + 2C\delta^{\frac{1}{400 d}}\eps$,
    which is at most $\frac{\eps}{16}$ for large enough $C_2$.
  \end{proof}

  We can now prove Theorem~\ref{thm:super_fried}.
  \begin{proof}[Proof of Theorem~\ref{thm:super_fried}]
    Combining~\eqref{eq:super_frid1},~\eqref{eq:super_frid} and Claim~\ref{claim:bd_outside_junta} we get that
    $\sum\limits_{S: S\not\subseteq \mathcal{J}}\widehat{f}(S)^2\leq \frac{\eps}{8}$.
    Define $h(x) = 1$ if $\sum\limits_{S: S\subseteq \mathcal{J}}\widehat{f}(S)\chi_S(x)\geq 1/2$ and $0$ otherwise.
    Clearly, $h$ is a $\mathcal{J}$-junta and we have that
    \[
    \norm{f-h}_1
    =
    \norm{f-h}_2^2
    \leq 4\left\|f-\sum\limits_{S: S\subseteq \mathcal{J}}\widehat{f}(S)\chi_S(x)\right\|_2^2
    =4\sum\limits_{S: S\not\subseteq \mathcal{J}}\widehat{f}(S)^2\leq \eps.
    \]
    Finally, note that as the sum of $\left(1-\frac{1}{3d}\right)$-noisy influences of $f$ is at most $O(d)$ by Fact~\ref{fact:sum_of_noisy}, we get that
    \[
    \card{\mathcal{J}}\leq O\left(\frac{d}{\delta}\right).
    \qedhere
    \]
  \end{proof}

\subsection{Proof of Claim~\ref{claim:super_fried_main}}\label{sec:missing_proofs}
For the proof of Claim~\ref{claim:super_fried_main} we need a version of the hypercontractive inequality with the noise operator, as follows:
\begin{lemma}\label{lemma:hypercontractivity_noise}[\cite{ODonnell}]
  Let $q>2$, and $\rho\leq \frac{1}{\sqrt{q-1}}$. Then for all $f\colon\{0,1\}^n\to\mathbb{R}$ we have
  $\norm{T_{\rho} f}_q\leq \norm{f}_2$.
\end{lemma}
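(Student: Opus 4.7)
The plan is to prove Lemma~\ref{lemma:hypercontractivity_noise} via the classical Bonami--Beckner route: first establish a one-dimensional (two-point) hypercontractive inequality as a base case, then tensorize by induction on $n$ using Minkowski's integral inequality.

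For the base case $n=1$, any $f\colon\power{1}\to\mathbb{R}$ may be written as $f = a + b\chi_{\{1\}}$, so that $T_\rho f = a + \rho b\chi_{\{1\}}$. The claim $\norm{T_\rho f}_q\le\norm{f}_2$ reduces to
\[
\tfrac{1}{2}\bigl(|a+\rho b|^q + |a-\rho b|^q\bigr) \le (a^2+b^2)^{q/2}.
\]
By homogeneity one may assume $a=1$, so the task becomes showing, for $\rho^2\le 1/(q-1)$,
\[
\tfrac{1}{2}\bigl(|1+\rho t|^q + |1-\rho t|^q\bigr) \le (1+t^2)^{q/2} \qquad (t\in\mathbb{R}).
\]
Both sides agree through second order at $t=0$ (both equal $1 + \tfrac{q}{2}t^2 + O(t^4)$ precisely when $\rho^2 = 1/(q-1)$, which is what identifies this as the critical value). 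The inequality is then verified globally by a direct calculus argument: expand both sides as power series in $t^2$ and check term by term that the Taylor coefficients of the right-hand side dominate those of the left. Monotonicity in $\rho$ then extends the result from $\rho = 1/\sqrt{q-1}$ to all smaller $\rho$.

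For the inductive step, decompose $f\colon\power{n}\to\mathbb{R}$ along the last coordinate as $f(x) = g(x_{-n}) + h(x_{-n})\chi_{\{n\}}(x_n)$. Since $T_\rho$ acts coordinatewise,
\[
T_\rho f(x) = (T_\rho g)(x_{-n}) + \rho\,(T_\rho h)(x_{-n})\,\chi_{\{n\}}(x_n),
\]
where on the right $T_\rho$ denotes the noise operator on $\power{n-1}$. Applying the two-point inequality from the base case pointwise in $x_{-n}$ gives
\[
\mathbb{E}_{x_n}|T_\rho f(x)|^q \le \bigl((T_\rho g)(x_{-n})^2 + (T_\rho h)(x_{-n})^2\bigr)^{q/2}.
\]
Taking $q$-th roots and using Minkowski's inequality in $L^{q/2}(\power{n-1})$ (valid since $q\ge 2$), followed by the inductive hypothesis $\norm{T_\rho g}_q\le\norm{g}_2$ and $\norm{T_\rho h}_q\le\norm{h}_2$, yields
\[
\norm{T_\rho f}_q \le \bigl(\norm{T_\rho g}_q^2 + \norm{T_\rho h}_q^2\bigr)^{1/2} \le \bigl(\norm{g}_2^2 + \norm{h}_2^2\bigr)^{1/2} = \norm{f}_2,
\]
where the final equality uses that $g$ and $h\chi_{\{n\}}$ are orthogonal in $L^2(\power{n})$.

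The entire analytic content of the proof lives in the two-point inequality. Tensorization is essentially mechanical once the base case is in hand, requiring only Minkowski's inequality (which needs $q\ge 2$) and Parseval-type orthogonality. The main obstacle is therefore the two-point inequality itself: identifying $\rho = 1/\sqrt{q-1}$ as the exact threshold at which second-order Taylor coefficients of the two sides match, and then pushing this through to a global inequality. The cleanest route reduces it to showing nonnegativity of the Taylor coefficients of an explicit real-analytic function in $t^2$; this computation is classical and appears in standard references such as~\cite{ODonnell}.
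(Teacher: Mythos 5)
The paper does not actually prove this lemma --- it is quoted as a standard fact from O'Donnell's book --- so there is no in-paper argument to compare against; what matters is whether your proof stands on its own. Your overall architecture (two-point inequality plus tensorization) is the classical and correct one, and the inductive step is sound: the pointwise application of the base case in $x_{-n}$, the use of the triangle inequality in $L^{q/2}$ (legitimate since $q/2\geq 1$), and the Parseval identity $\norm{g}_2^2+\norm{h}_2^2=\norm{f}_2^2$ all go through exactly as you describe, as does the reduction to $\rho=1/\sqrt{q-1}$ via the semigroup property and the fact that $T_\sigma$ is an $L^q$-contraction.

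The gap is in the base case, which is where you yourself locate ``the entire analytic content.'' Your proposed verification --- expand both sides in powers of $t^2$ and check that the coefficients of $(1+t^2)^{q/2}$ dominate those of $\tfrac12\bigl(|1+\rho t|^q+|1-\rho t|^q\bigr)$ --- fails for non-integer $q$ on two counts. First, the binomial series for $|1\pm\rho t|^q$ only converges for $|\rho t|<1$, so it cannot by itself cover all $t\in\mathbb{R}$. Second, and more seriously, term-by-term domination is simply false: for $q=2.5$ and $\rho^2=1/(q-1)=2/3$, the coefficient of $t^6$ on the left is $\binom{2.5}{6}\rho^6\approx -0.0014$ while on the right it is $\binom{1.25}{3}\approx -0.039$, so the left coefficient exceeds the right one. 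The inequality is of course still true, but not for this reason. The standard repair is to prove instead the dual two-point inequality $\norm{T_\sigma g}_2\leq\norm{g}_p$ for $p\in(1,2)$ and $\sigma\leq\sqrt{p-1}$, where after normalizing to $g=1+b\chi$ with $|b|\leq 1$ every generalized binomial coefficient $\binom{p}{2k}$ is nonnegative (the factors pair into products of two negatives), so one may discard all terms beyond $t^2$ and finish with Bernoulli's inequality; the $(2,q)$ statement then follows by self-adjointness of $T_\rho$ and H\"older duality, using $(q'-1)(q-1)=1$ so that $\sqrt{q'-1}=1/\sqrt{q-1}$. With that substitution your argument is complete.
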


We also need the following claim, asserting that noise only decreases Talagrand's boundary.
\begin{claim}\label{claim:noise_decreases_tal}
  For $f\colon \{0,1\}^n\to\mathbb{R}$ and any noise operator $S = T_{\rho_1}\otimes\ldots T_{\rho_n}$ we have
  $\Expect{x}{\norm{\nabla{S f}(x)}_p}\leq \Expect{x}{\norm{\nabla{f}(x)}_p}$.
\end{claim}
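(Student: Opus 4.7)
The plan is to establish the pointwise inequality
\[
\|\nabla Sf(x)\|_p \leq S(\|\nabla f\|_p)(x),
\]
from which the lemma follows immediately upon taking expectations, since $S$ preserves $\E$.

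First I would use Fourier analysis to observe the commutation identity
\[
\partial_i(Sf) = \rho_i \cdot S(\partial_i f),
\]
which follows because $\partial_i f = 2\sum_{T\ni i}\widehat f(T)\chi_{T\setminus\{i\}}$, while $S$ multiplies $\widehat g(T)$ by $\prod_{j\in T}\rho_j$; both sides agree term by term. Since $\rho_i\in[0,1]$, this implies the pointwise bound $|\partial_i Sf(x)|\leq|S(\partial_i f)(x)|$.

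Next I would view $S$ as averaging over a coupled noisy copy, $Sg(x)=\E_{y\sim x}[g(y)]$, and apply Jensen's inequality to the convex function $|\cdot|$ to get $|S(\partial_i f)(x)|\leq S(|\partial_i f|)(x)$. The crucial step is then Minkowski's integral inequality, applied to the non-negative kernel $(i,y)\mapsto |\partial_i f(y)|$ with counting measure on $i\in[n]$ and the noise distribution on $y$:
\[
\Bigl(\sum_i \bigl(S(|\partial_i f|)(x)\bigr)^p\Bigr)^{1/p}
\leq S\Bigl(\bigl(\textstyle\sum_i |\partial_i f|^p\bigr)^{1/p}\Bigr)(x)
= S(\|\nabla f\|_p)(x).
\]
Chaining these three steps gives the desired pointwise inequality, and then taking $\E_x$ of both sides and using $\E_x[Sh(x)]=\E_x[h(x)]$ finishes the proof.

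The main subtlety — more a pitfall than a substantive obstacle — is that the naive approach of bounding $\|\nabla Sf\|_p^p \leq S(\|\nabla f\|_p^p)$ via Jensen applied to $|\cdot|^p$ only yields $\E_x[\|\nabla Sf\|_p]\leq \E_x[\|\nabla f\|_p^p]^{1/p}$, which is too weak for $p>1$ (by Jensen the right-hand side exceeds $\E_x[\|\nabla f\|_p]$). Using Minkowski's integral inequality, which exchanges the order of the averaging and the $\ell_p$-norm \emph{before} raising to the $p$-th power, is precisely what produces the correct direction. All other steps are formal.
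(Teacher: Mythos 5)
Your proof is correct and follows essentially the same route as the paper: commute the noise operator past the discrete derivatives (picking up factors $\rho_i\leq 1$), then apply the triangle inequality for the $\ell_p$-norm of an average of vectors (Minkowski's integral inequality) and take expectations. The paper compresses your Jensen-plus-Minkowski steps into a single invocation of the triangle inequality for $\norm{\E_{y}[\nabla f(y)]}_p$, but the argument is the same.
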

\begin{proof}
  \[\Expect{x}{\norm{\nabla (S f)(x)}_p}
  \leq
  \Expect{x}{\norm{S \nabla f(x)}_p}
  =\Expect{x}{\norm{\Expect{y\sim S x}{\nabla f(y)}}_p},
  \]
  and the result follows from the triangle inequality.
\end{proof}

  Take $(J,z)$ to be a random restriction so that each $j\in [n]$ is included in $J$ independently with probability $\frac{1}{3d}$,
  and consider $g = S_k'' f$.
  We calculate
  the contribution of $\bar{\mathcal{J}}$ to the level $1$ weight of a restriction of $g$:
  \begin{align*}
  d\Expect{J, z}{\sum\limits_{j\in J\cap \bar{\mathcal{J}}}\widehat{g_{\overline{J}\rightarrow z}}(\set{j})^2}
  =d\sum\limits_{j\in \overline{\mathcal{J}}}\Expect{J}{\sum\limits_{S}\widehat{g(S)}^2 1_{S\cap J = \{j\}}}
  &=d\sum\limits_{S}\frac{\card{S\cap \bar{\mathcal{J}}}}{3d}\left(1-\frac{1}{3d}\right)^{\card{S}-1}\widehat{g(S)}^2\\
  &=\frac{1}{3}\sum\limits_{S}\card{S\cap \bar{\mathcal{J}}}\left(1-\frac{1}{3d}\right)^{\card{S}-1}\widehat{g(S)}^2.
  \end{align*}
  Also,
  \begin{align*}
    \sum\limits_{j\not\in \mathcal{J}} \norm{S_k \partial_j f}_2^2
    =
    4\sum\limits_{S}
    \card{S\cap \bar{\mathcal{J}}}
    \left(1-\frac{1}{2^k}\right)^{-2}\rho_1^{2\card{S\cap \mathcal{J}}}
    \rho_2^{2\card{S\cap \bar{\mathcal{J}}}}\widehat{g(S)}^2,
  \end{align*}
  where $\rho_1 = \frac{1-1/d}{1-2/(3d)}$ and $\rho_2 = \frac{1-1/2^k}{1-1/2^k+1/(3d)}$. Using $\frac{a}{b}\leq \frac{a+c}{b+c}$ for $0\leq a\leq b$ and $c\geq 0$ we have
  \[
  \rho_1 \leq \frac{1-1/d+2/(3d)}{1-2/(3d)+2/(3d)} = 1-\frac{1}{3d},
  \qquad
  \rho_2 \leq \frac{1-1/2^k+(1/2^k-1/(3d))}{1-1/2^k+1/(3d)+(1/2^k-1/(3d))} = 1-\frac{1}{3d},
  \]
  and so
  \[
  \sum\limits_{j\not\in \mathcal{J}} \norm{S_k \partial_j f}_2^2\leq
  12\left(1-\frac{1}{d}\right)^{-2} d\Expect{J, z}{\sum\limits_{j\in J\cap \bar{\mathcal{J}}}\widehat{g_{\overline{J}\rightarrow z}}(\set{j})^2}
  \ll d\Expect{J, z}{\sum\limits_{j\in J\cap \bar{\mathcal{J}}}\widehat{g_{\overline{J}\rightarrow z}}(\set{j})^2}.
  \]
  Hence our
  goal of upper bounding the left hand side in Claim~\ref{claim:super_fried_main} reduces to
  upper bounding the contribution of $\mathcal{J}$ to the level $1$ of random restrictions of $g$. For $J, z$ define
 \[
  L_{J,z} = \sett{j\in J\cap \bar{\mathcal{J}}}{\card{\widehat{g_{\overline{J}\rightarrow z}}(\set{j})} \leq \tau},
  \qquad
  H_{J,z} = \sett{j\in J\cap \bar{\mathcal{J}}}{\card{\widehat{g_{\overline{J}\rightarrow z}}(\set{j})} > \tau}.
  \]
  Then
  \[
  \Expect{J, z}{\sum\limits_{j\in J\cap \bar{\mathcal{J}}}\widehat{g_{\overline{J}\rightarrow z}}(\set{j})^2}
  =\Expect{J, z}{\sum\limits_{j\in L_{J,z}}\widehat{g_{\overline{J}\rightarrow z}}(\set{j})^2}
  +\Expect{J, z}{\sum\limits_{j\in H_{J,z}}\widehat{g_{\overline{J}\rightarrow z}}(\set{j})^2}.
  \]

  \paragraph{Bounding the contribution from $L_{J,z}$.}
  For the first sum, we have
  \begin{align*}
  \sum\limits_{j\in L_{J,z}}\widehat{g_{\overline{J}\rightarrow z}}(\set{j})^2
  \leq
  \left(\sum\limits_{j\in L_{J,z}}\widehat{g_{\overline{J}\rightarrow z}}(\set{j})^2\right)^{p}
  &\leq \left(\sum\limits_{j\in L_{J,z}}\tau^{(2p-1)/p}\card{\widehat{g_{\overline{J}\rightarrow z}}(\set{j})}^{1/p}\right)^{p}\\
  &\leq \tau^{2p-1}\left(\sum\limits_{j}\card{\widehat{g_{\overline{J}\rightarrow z}}(\set{j})}^{1/p}\right)^{p}.
  \end{align*}
  In the last expression we have $\tau^{2p-1}$ times the $1/p$-norm of the vector $(\widehat{g_{\overline{J}\rightarrow z}}(\set{j}))_{j\in J}$.
  Thus, using the triangle inequality in a similar way to Lemma~\ref{lemma:tal_lvl1}, we may bound the last expression from above by
  \begin{equation}\label{eq3}
  \tau^{2p-1}\Expect{x}{\norm{\nabla{g_{\overline{J}\rightarrow z}}(x)}_{1/p}}.
  \end{equation}
  Taking expectation, we get that
  \[
  \Expect{J,z}{\sum\limits_{j\in L_{J,z}}\widehat{g_{\overline{J}\rightarrow z}}(\set{j})^2}
  \leq \tau^{2p-1}\Expect{J,z}{\Expect{x}{\norm{\nabla {g_{\overline{J}\rightarrow z}}(x)}_{1/p}}}
  = \tau^{2p-1}\Expect{x}{\Expect{J,z}{\norm{\nabla g_{\overline{J}\rightarrow z}(x)}_{1/p}}}.
  \]
  Fix $x,z$ and consider the vector $\nabla g(x,z)$.
  Note that
  \[
  \Expect{J}{\norm{\nabla g_{\overline{J}\rightarrow z}(x)}_{1/p}^{1/p}}
  =
  \Expect{J}{\sum\limits_{j}\card{\partial_j g(x,z)}^{1/p}1_{j\in J}}
  =\frac{1}{3d}\norm{\nabla g(x,z)}_{1/p}^{1/p},
  \]
  and it follows by Jensen's inequality that
  \[
  \Expect{J}{\norm{\nabla g_{\overline{J}\rightarrow z}(x)}_{1/p}}
  \leq
  \Expect{J}{\norm{\nabla g_{\overline{J}\rightarrow z}(x)}_{1/p}^{1/p}}^{p}
  \leq d^{-p} \norm{\nabla g(x,z)}_{1/p}.
  \]
  It follows that~\eqref{eq3} is upper bounded by $\tau^{2p-1}d^{-p}\Expect{x,z}{\norm{\nabla g(x,z)}_{1/p}}$.
  Using Claim~\ref{claim:noise_decreases_tal}, we have that this may be upper bounded by
  $\tau^{2p-1}d^{-p}\Expect{x,z}{\norm{\nabla f(x,z)}_{1/p}} = \tau^{2p-1}d^{-p} \Expect{x,z}{s_f(x,z)^p}$.

  \paragraph{Bounding the contribution from $H_{I,z}$.}
  We have
  \[
  \sum\limits_{j\in H_{J,z}}\widehat{g_{\overline{J}\rightarrow z}}(\set{j})^2
  \leq \tau^{-\frac{1}{100d}}\sum\limits_{j\in H_{J,z}}\card{\widehat{g_{\overline{J}\rightarrow z}}(\set{j})}^{2+\frac{1}{100d}}
  \leq \tau^{-\frac{1}{100d}}\sum\limits_{j\in J\cap\bar{\mathcal{J}}}\card{\widehat{g_{\overline{J}\rightarrow z}}(\set{j})}^{2+\frac{1}{100d}},
  \]
  and we next take expectation over $z$:
  \begin{equation}\label{eq5}
  \Expect{z}{\sum\limits_{j\in H_{J,z}}\widehat{g_{\overline{J}\rightarrow z}}(\set{j})^2}
  \leq
  \tau^{-\frac{1}{100d}}\sum\limits_{j\in J\cap\bar{\mathcal{J}}}\Expect{z}{\card{\widehat{g_{\overline{J}\rightarrow z}}(\set{j})}^{2+\frac{1}{100d}}}.
  \end{equation}
  Define the operator
  $T = T_{\rho_1}^{\mathcal{J}}T_{\rho_2}^{\overline{\mathcal{J}}}$
  where $\rho_1 = \frac{1-2/(3d)}{1-1/(3d)}$ and $\rho_2 = \frac{1-1/2^k+1/(3d)}{1-1/2^k+2/(3d)}$,
  and note that by the semi-group property of the noise operator (namely, the fact that $T_{\rho}T_{\rho'} = T_{\rho\rho'}$)
  we may write $g = T S_k' f$. Thus, $\widehat{g_{\overline{J}\rightarrow z}}(\set{j})$ is equal to
  \[
  \sum\limits_{S\subseteq \overline{J} \cup\{j\}, S\ni j} \widehat{g}(S)\chi_{S\setminus\{j\}}(z) =
  \sum\limits_{S\subseteq \overline{J} \cup\{j\}, S\ni j}
  \rho_1^{\card{S\cap \mathcal{J}}}
  \rho_2^{\card{S\cap \overline{\mathcal{J}}}}
  \widehat{S_k' f}(S)\chi_{S\setminus\{j\}}(z).
  \]
  Thus, using hypercontractivity, i.e.~Lemma~\ref{lemma:hypercontractivity_noise}, we have that
  \begin{align}\label{eq4}
  \Expect{z}{\card{\widehat{g_{\overline{J}\rightarrow z}}(\set{j})}^{2+\frac{1}{100d}}}
  &=\left\|\sum\limits_{S\subseteq \overline{J} \cup\{j\}, S\ni j}
  \rho_1^{\card{S\cap \mathcal{J}}}
  \rho_2^{\card{S\cap \overline{\mathcal{J}}}}
  \widehat{S_k' f}(S)\chi_{S\setminus\set{j}}(z)\right\|_{2+\frac{1}{100d}}^{2+\frac{1}{100d}}\notag\\
  &\leq \left\|\sum\limits_{S\subseteq \overline{J} \cup\{j\}, S\ni j}
  \rho_1^{\card{S\cap \mathcal{J}}}
  \rho_2^{\card{S\cap \overline{\mathcal{J}}}}
  \left(1-\frac{1}{3d}\right)^{-(\card{S}-1)}\widehat{S_k' f}(S)\chi_{S\setminus\set{j}}(z)\right\|_2^{2+\frac{1}{100d}}\notag\\
  &\leq \left\|\sum\limits_{S\ni j}\widehat{S_k' f}(S)\chi_{S\setminus\set{j}}(z)\right\|_2^{2+\frac{1}{100d}}.
  \end{align}
  In the last inequality, we used Parseval's equality and that fact that $\rho_1,\rho_2\leq 1-\frac{1}{3d}$.
  Note that
  \[
  \left\|\sum\limits_{S\ni j}\widehat{S_k' f}(S)\chi_{S\setminus\set{j}}(z)\right\|_2^{2+\frac{1}{100d}}
  \ll \norm{S_k' \partial_j f}_2^{2+\frac{1}{100d}},
  \]
  so plugging~\eqref{eq4} into~\eqref{eq5} yields that
  \[
  \Expect{z}{\sum\limits_{j\in H_{J,z}}\widehat{g_{\overline{J}\rightarrow z}}(\set{j})^2}\ll
  \tau^{-\frac{1}{100d}}\sum\limits_{j\in J\cap\bar{\mathcal{J}}}\norm{S_k' \partial_j f}_2^{2+\frac{1}{100d}}.
  \]
  Taking expectation over $J$ now gives that
  \[
  \Expect{z,J}{\sum\limits_{j\in H_{J,z}}\widehat{g_{\overline{J}\rightarrow z}}(\set{j})^2}
  \ll
  \tau^{-\frac{1}{100d}}\sum\limits_{j\in\bar{\mathcal{J}}} \norm{S_k' \partial_j f}_2^{2+\frac{1}{100d}} \Expect{J}{1_{j\in J}}
  = \frac{1}{3d}\tau^{-\frac{1}{100d}}\sum\limits_{j\in\bar{\mathcal{J}}} \norm{S_k' \partial_j f}_2^{2+\frac{1}{100d}}.
  \]
\qed


\section*{Acknowledgments} 
We thank an anonymous for many helpful comments and corrections to an earlier version of the paper which led to
considerable improvements of the paper in all aspects.

\bibliographystyle{amsplain}
\newcommand{\etalchar}[1]{$^{#1}$}


\begin{dajauthors}
\begin{authorinfo}[pgom]
  Ronen Eldan\\
  Professor\\
  Weizmann Institute of Science\\
  Rehovot, Israel\\
  roneneldan\imageat{}gmail\imagedot{}com \\
  \url{https://www.wisdom.weizmann.ac.il/~ronene/}
\end{authorinfo}
\begin{authorinfo}[johan]
  Guy Kindler\\
  Professor\\
  Hebrew University of Jerusalem\\
  Jerusalem, Israel\\
  wgkindler\imageat{}gmail\imagedot{}com \\
  \url{https://www.cs.huji.ac.il/~gkindler/}
\end{authorinfo}
\begin{authorinfo}[laci]
  Noam Lifshitz\\
  Assistant Professor\\
  Hebrew University of Jerusalem\\
  Jerusalem, Israel\\
  noamlifshitz\imageat{}gmail\imagedot{}com\\
  \url{https://mathematics.huji.ac.il/people/noam-lifshitz}
\end{authorinfo}
\begin{authorinfo}[andy]
  Dor Minzer\\
  Assistant Professor\\
  Massachusetts Institute of Technology\\
  Cambridge, MA, USA\\
  dminzer\imageat{}mit\imagedot{}edu\\
  \url{https://sites.google.com/view/dorminzer/home}
\end{authorinfo}
\end{dajauthors}

\end{document}